\def\url@leostyle{%
  \@ifundefined{selectfont}{\def\UrlFont{\sf}}{\def\UrlFont{\small\ttfamily}}}
\theoremstyle{definition}
\newtheorem{thm}{Theorem}
\newtheorem{lem}[thm]{Lemma}
\newtheorem{defi}[thm]{Definition}
\newtheorem{prop}[thm]{Proposition}
\newtheorem{exam}[thm]{Example}
\newtheorem{rem}[thm]{Remark}
\newtheorem{claim}[thm]{Claim}
\title{The gonality sequence of complete graphs}
\author{Filip Cools}
\email{{\tt f.cools@kuleuven.be}}
\address{KU Leuven, Department of Mathematics, Celestijnenlaan 200B, 3001 Heverlee, Belgium}
\author{Marta Panizzut}
\email{{\tt panizzut@math.tu-berlin@de}}
\address{TU Berlin, Institut f\"ur Mathematik, Stra{\ss}e des 17.Juni 136, 10623 Berlin, Germany}
\keywords{Gonality sequence, complete graphs, plane curves}
\subjclass[2010]{14T05, 14H51, 05C99}
\begin{document}

\begin{abstract} The gonality sequence $(\gamma_r)_{r\geq1}$ of a finite graph / metric graph / algebraic curve comprises the minimal degrees $\gamma_r$ of linear systems of rank $r$. For the complete graph $K_d$, we show that $\gamma_r =  kd - h$ if $r<g=\frac{(d-1)(d-2)}{2}$,
where $k$ and $h$ are the uniquely determined integers such that $r = \frac{k(k+3)}{2} - h$ with $1\leq k\leq d-3$ and $0 \leq h \leq k $. This shows that the graph $K_d$ has the gonality sequence of a smooth plane curve of degree $d$. The same result holds for the corresponding metric graphs. 
\end{abstract}

\maketitle

\begin{section}{Introduction}
Baker and Norine in \cite{BN} introduced a theory of linear systems on graphs, later generalized by several authors to metric graphs and other combinatorial objects \cite{MZ, GK}. It presents strong analogies with the one on algebraic curves.  Many remarkable theorems have been proven to have a combinatorial counterpart, for example the Riemann-Roch Theorem and Clifford's Theorem, see \cite{BN, MZ, Cop}.

With the notation $g^r_s$ we indicate a linear system of degree $s$ and rank $r$. We refer to Section \ref{0} for the definitions. The {\sl gonality sequence} $(\gamma_r)_{r\geq1}$ of a finite graph is defined as 
\[\gamma_r := \min\{ s \ \textrm{such that there exists a} \ g^r_s\}.\] 
The gonality sequence was first introduced in \cite{LM} in the context of algebraic curves and the terminology comes from the integer $\gamma_1$, which is called the {\sl gonality}. By Riemann-Roch, it follows that $\gamma_r = g+r$ if $r\geq g$. 

The gonality sequence is known for general chains of loops, as it follows from the results in \cite{CDPR}. Except for graphs with low gonality, the gonality sequence is still undetermined for other graphs.
The main result proven in this paper is the following: 

\begin{thm} \label{gs} The gonality sequence of $K_d$ is given by
\[
\gamma_r = \begin{cases}  kd - h &  \textrm{if} \ r <g \\ 
g+r &  \textrm{if} \ r \geq g 
\end{cases} \quad ,\\
\]
where $g=\frac{(d-1)(d-2)}{2}$ is the genus of $K_d$, and $k$ and $h$ are the uniquely determined integers with $1\leq k\leq d-3$ and $0 \leq h \leq k $ such that 
\[
r = \frac{k(k+3)}{2} - h.
\]
\end{thm}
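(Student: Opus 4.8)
The plan is to prove the two inequalities $\gamma_r \le kd-h$ and $\gamma_r \ge kd-h$ separately for $r<g$, the case $r\ge g$ being immediate from the Riemann--Roch formula $\gamma_r=g+r$ recalled in the introduction. Throughout I would work with the \emph{line divisor} $H:=\sum_{v}v$ obtained by placing one chip on each of the $d$ vertices; it has degree $d$, and since $K_d$ is $(d-1)$-regular its canonical divisor is $K=(d-3)\sum_v v=(d-3)H$. Thus the divisors $kH$ for $0\le k\le d-3$ should play the role of the systems cut on a smooth plane curve of degree $d$ by curves of degree $k$, and Riemann--Roch supplies the symmetry $K-kH=(d-3-k)H$. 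This symmetry lets me transport statements about $kH$ to statements about $(d-3-k)H$ and, for the lower bound, reduce to divisors of degree at most $g-1$.

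For the upper bound it suffices to prove $r(kH)\ge \frac{k(k+3)}{2}$ for $1\le k\le d-3$: granting this, deleting any $h\le k$ chips from $kH$ (possible since every coefficient of $kH$ equals $k\ge h$) yields an effective divisor of degree $kd-h$ and rank at least $\frac{k(k+3)}{2}-h=r$, so $\gamma_r\le kd-h$. The estimate $r(kH)\ge\frac{k(k+3)}{2}$ cannot follow from mere superadditivity of rank, which only gives the increment $r(H)=2$ per added copy of $H$, whereas the true increment from $(k-1)H$ to $kH$ is $k+1$; the special position of $K_d$ must be used. I would establish it directly, showing that for \emph{every} effective $E$ with $\deg E=\frac{k(k+3)}{2}$ the divisor $kH-E$ is equivalent to an effective one: fix a vertex $q$ of maximal multiplicity in $E$, run Dhar's burning algorithm from $q$ to compute the $q$-reduced representative of $kH-E$, and show that its coefficient at $q$ remains nonnegative. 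The regularity and vertex-transitivity of $K_d$ make the burning uniform, and the binding cases are the \emph{concentrated} ones: already for $k=1$ one checks that $H-(2v_1+v_2)$ is its own $v_1$-reduced form with value $-1$ at $v_1$, which is exactly what pins $r(H)=2$. The combinatorial lemma handling such concentrated $E$ is the technical core of this direction.

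For the lower bound I must show that no divisor $D$ with $\deg D<kd-h$ attains rank $r$; equivalently, for each such $D$ I must produce a single effective $E$ of degree $r$ with $D-E$ \emph{not} equivalent to an effective divisor, certified by a reduced representative carrying a negative coefficient. I may take $D$ effective of positive rank and, using the duality $r(K-D)=r(D)-\deg D+g-1$, restrict to $\deg D\le g-1$; vertex-transitivity further lets me sort the multiplicities of $D$. The mechanism I want to imitate is plane-curve residuation: every special effective divisor sits inside the adjoint system $|K|=|(d-3)H|$, so writing $K-D\sim F\ge 0$ expresses $D$ through the complementary divisor $F$ and bounds its rank by a Hilbert-function count for $|(d-3)H|$. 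Combinatorially I would make this precise by analysing, via Dhar's algorithm, the reduced representatives of $D$ and of $K-D$, and by exhibiting for each under-degree $D$ an explicit obstructing $E$, necessarily with repeated vertices as the computation $H-(2v_1+v_2)$ indicates.

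The step I expect to be the main obstacle is the universal quantifier over all $D$ of a given degree: I must show that the divisors $kH$ minus points are genuinely degree-minimal in their rank, i.e.\ that no more efficient chip configuration exists on $K_d$. This should require an extremal argument showing that any rank-$r$ divisor can be transformed, without increasing degree, into one dominated by a suitable $kH$; such rigidity, that all high-rank classes on $K_d$ ultimately descend from the line class $H$, is the combinatorial shadow of the classification of special linear systems on plane curves, and is where the bulk of the work lies. Once both bounds are in place, the matching inequalities give $\gamma_r=kd-h$ for $r<g$, while Riemann--Roch gives $\gamma_r=g+r$ for $r\ge g$, completing the proof and showing that $K_d$ has the gonality sequence of a smooth plane curve of degree $d$.
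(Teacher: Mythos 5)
Your proposal has the right skeleton (Riemann--Roch for $r\ge g$, the divisors $kH\sim kd\,(v_d)$ for the upper bound, adding/removing single chips to reduce to $h=0$ resp.\ $h=k$), but at exactly the two places you yourself flag as ``the technical core'' and ``where the bulk of the work lies'' there is no argument, and these are where the theorem actually lives. For the upper bound you propose to verify $\mathrm{rk}(kH)\ge\frac{k(k+3)}{2}$ by running Dhar's algorithm on $kH-E$ for \emph{every} effective $E$ of degree $\frac{k(k+3)}{2}$ and controlling the ``concentrated'' cases; this universal quantifier over $E$ is never discharged, and no lemma handling it is stated or proved. The paper sidesteps it entirely by a lemma of Cori and Le Borgne specific to complete graphs: if $D$ is a $v_d$-reduced \emph{effective} divisor and $D(v_i)=0$ for some $i\ne d$, then $\mathrm{rk}(D-(v_i))=\mathrm{rk}(D)-1$ exactly. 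This converts the rank computation into a single deterministic iteration (subtract the zero-coefficient vertex, re-reduce, repeat), whose successive reduced forms can be written in closed form and which reaches a rank $-1$ divisor after exactly $\frac{k(k+3)}{2}+1$ subtractions; no case analysis over all $E$ is needed. Without that lemma, or a proof of your concentrated-case lemma, the inequality $\mathrm{rk}(kH)\ge\frac{k(k+3)}{2}$ is unproven.

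The gap in the lower bound is more serious, because the rigidity principle you hope for --- that every rank-$r$ divisor can be transformed, without increasing degree, into one dominated by some $kH$ --- is not established anywhere, and the paper never proves (or needs) any such classification of rank-$r$ classes. Instead it reduces to $h=k$ via $\mathrm{rk}(D+(v))\le\mathrm{rk}(D)+1$, takes $D$ to be $v_d$-reduced, and uses a clean characterization of reduced divisors on $K_d$ (there is an ordering of $V\setminus\{v_d\}$ with $D(v_i)\le i-1$) to translate the non-existence of a $g^r_{k(d-1)-1}$ into a purely arithmetic claim: writing $D(v_d)=a(d-1)+b$ and $\alpha_i=D(v_i)+a-(i-2)$, one must show $\min\{t_1,t_2\}\le\frac{k(k+1)}{2}$ where $t_1=\sum_i\alpha_i^+$ and $t_2=b+1+\sum_i(\alpha_i-1)^+$. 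The subtle point your plan misses is that a single candidate obstruction does not suffice: there are reduced divisors, e.g.\ $(k-1)(d-1)\,(v_d)+\sum_{i\ge2}(v_i)$, for which the natural choice $D'-E$ (with $D'$ having negative coefficient at $v_d$) fails, and one must play two linearly equivalent forms $D'$ and $D''$ of $D$ against each other, showing at least one of them admits an obstructing $E$ of degree $\frac{k(k+1)}{2}$; this is proved by identifying explicit worst-case sequences $\beta^{(p,q)}$ and a three-case computation. Nothing in your proposal substitutes for this, so the sharpness direction is also unproven. Your residuation step $r(K-D)=r(D)-\deg D+g-1$ to restrict to $\deg D\le g-1$ is harmless but does not shrink the quantifier over all $D$ of a given degree, which is the real difficulty.
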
 
 
\vspace{\baselineskip} 

The proof consists of two parts, respectively presented in Section \ref{1} and in Section \ref{2}. 
In Section \ref{1} we verify the existence of divisors of degree $s=kd - h$ and rank (at least) $r=\frac{k(k+3)}{2} - h$. In other words, we prove that $kd - h$ is an upper bound for $\gamma_r$. Herefore, we use an algorithm, presented by Cori and Le Borgne in \cite{CLB}, for computing the rank of divisors on complete graphs. 
The more involving part is showing that the upper bound $kd - h$ for $\gamma_r$ is in fact sharp, which is done in Section \ref{2}. We translate this problem into a property of sequences of integers $(\alpha_i)_{i=1,\dots,d-1}$ satisfying certain hypotheses. In both parts, reduced divisors play an essential role. We provide an easy characterization of reduced divisors on complete graphs.   

In Section \ref{mg}, we extend the result to the complete metric graph $K_d$ with edge lengths equal to one. To be precise, we show that the gonality sequence of this metric graph is the same as the one of the corresponding ordinary graph; hereby answering \cite[Conjecture 3.14]{Bak} in the case of complete graphs. The existence part of the theorem follows immediately from \cite[Theorem 1.3]{HKN}, which relates the rank of divisors on graphs and their corresponding metric graphs. The second part is proven in an analogous way as in Section \ref{2}, again utilizing the property of integer sequences. 

We conclude by remarking that our arguments do not directly extend to complete metric graphs with arbitrary edge lengths. This is done in Section \ref{arbitrary}, where we give a quick account on the problems we encountered.

\vspace{\baselineskip} 

While the proof is purely combinatorial, the motivation for considering the question comes from plane curves and the specialization of divisors from curves to graphs. 

Baker in \cite{Bak} showed that there is a close connection between linear systems on curves and on graphs. Indeed, let $R$ be a discrete valuation ring with field of fractions $K$. A model for a curve $X$ over $K$ is a surface over $R$ whose generic fiber is $X$. Given a smooth curve $X$ over $K$ and a strongly semistable regular model $\mathfrak{X}$ over $R$, it is possible to specialize a divisor on the curve to a divisor on the dual graph of the special fiber of $\mathfrak{X}$. The complete graph $K_d$ pops up if one takes a model of a smooth plane curve of degree $d$ degenerating to a union of $d$ lines. 

The gonality sequence of smooth plane curves has been computed by Ciliberto in \cite{Cil}, and Hartshorne in \cite{Har}. Therefore a natural question in this setting is whether the complete graph and the smooth plane curve have the same gonality sequence. Theorem \ref{gs} provides a positive answer. As we point out in Remark \ref{specialization}, the linear systems with degree $s=dk -h$ and rank $r=\frac{k(k+3)}{2} -h$ on plane curves specialize to linear systems of the same degree and rank on the graphs. So the first part of Theorem \ref{gs} can in fact be deduced from the result on plane curves. 

\begin{subsection}*{Acknowledgments} We thank Marc Coppens for the suggestion of the problem. We are very grateful for his valuable comments on early drafts of the paper. This work was conducted in the framework of Research Project G.0939.13N of the Research Foundation - Flanders (FWO).
\end{subsection}

\end{section}

\begin{section}{Linear systems and reduced divisors} \label{0}

Let $G$ be a finite graph without loop edges. We denote its vertex set by $V(G)$ and its edge set by $E(G)$.  
\begin{defi} A {\sl divisor} on $G$ is an element of the free abelian group $\textrm{Div}(G)$ on the set $V(G)$. Any divisor can be represented in a unique way as a finite formal combination of vertices of $G$ with integer coefficients:
\[
D = \sum_{v \in G} a_v \, (v), \ \ \textrm{with} \ a_v \in \mathbb{Z}. 
\]
The {\sl degree} $\textrm{deg}(D)$ of a divisor $D$ is the sum $\sum_{v \in V(G)} a_v$ of its coefficients. If $a_v \geq 0$ for every $v \in G$, the divisor is said to be {\sl effective}, and this is indicated with $D \geq 0$. 

\vspace{\baselineskip}

Let $f: V(G) \rightarrow \mathbb{Z}$ be an integer-valued function on the vertices of $G$. We define the {\sl principal divisor} corresponding to $f$ as 
\[ 
 \textrm{div}(f) = \sum_{v \in V(G)} \Big( \sum_{e=vw \in E(G)} \big(f(v) - f(w) \big)\Big) (v) .
\] 

Two divisor $D_1, D_2 \in \textrm{Div}(\Gamma)$ are {\sl linearly equivalent}, denoted by $D_1 \sim D_2$, if there exists a function $f$ such that 
\[
D_1  - D_2 = \textrm{div}(f). 
\]

\vspace{\baselineskip}

The {\sl linear system} corresponding to $D$, indicated with $|D|$, is the set of the effective divisors linearly equivalent to $D$. In symbols,
\[
|D| = \Big\{ E \in \textrm{Div}(G)\, \Big| \ E \geq 0, \, E \sim D\Big\}.
\]

The {\sl rank} $\textrm{rk}_{G}(D)$ of a divisor $D$ is defined as $-1$ if $D$ is not equivalent to any effective divisor, otherwise
\[
\textrm{rk}_{G}(D)= \max \Big\{ r \in \mathbb{Z}_{\geq 0} \Big| \ |D-E| \not = \emptyset \ \ \  \forall \ E \in \textrm{Div}(G), \ E \geq 0, \ \textrm{deg}(E)=r\Big\}. 
\]
We will often omit the subscript $G$ in $\textrm{rk}_{G}(D)$ when it is clear from the context on which graph $G$ we are working. 
\end{defi}

\medskip

Reduced divisors will play an important role in proving the results of this paper. We briefly repeat the definition presented in \cite{BN}. 
\begin{defi} Let $A$ be a subset of $V(G)$. Given $v \in A$, the {\sl outgoing degree $\textrm{outdeg}_A(v)$ of $A$ at $v$} is defined as the number of edges having $v$ as one endpoint and whose other endpoint lies in $V(G) \setminus A$. Let $D$ be a divisor on $G$ and $v$ be a vertex of $G$. The divisor $D$ is {\sl $v$-reduced} if it is effective in $V(G) \setminus \{v\}$ and each non-empty subset $A \subseteq V(G)\setminus \{ v \}$ contains a point $w$ such that $D(w) < \textrm{outdeg}_A(w)$.
\end{defi}
The following result is proven in \cite[Proposition 3.1]{BN}:

\begin{prop}
Let $v$ be a vertex on a graph $G$. Then for every divisor $D$ on $G$, there exists a unique $v$-reduced divisor $D'$ such that 
$D'\sim D$. 
\end{prop}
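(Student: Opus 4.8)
The plan is to establish the two assertions, existence and uniqueness, separately, using throughout the description of linear equivalence by \emph{firing sets}: for a subset $A \subseteq V(G)$, let $f$ be the function equal to $-1$ on $A$ and $0$ elsewhere; then $\textrm{div}(f)$ subtracts $\textrm{outdeg}_A(w)$ from each $w \in A$ and distributes those chips to the neighbours of $A$ lying outside $A$. I call this \emph{firing $A$}, and linear equivalence is generated by such moves together with their reverses. Of the two parts, uniqueness is the clean one and follows from a discrete maximum principle, whereas the genuinely delicate point is the \emph{termination} of the reduction procedure used for existence.

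For uniqueness, suppose $D_1$ and $D_2$ are both $v$-reduced with $D_1 \sim D_2$, say $D_1 - D_2 = \textrm{div}(f)$ for some $f : V(G) \to \mathbb{Z}$; assuming $G$ connected, I want $f$ constant, so that $D_1 = D_2$. Suppose not, and set $M = \max_w f(w)$ with maximal set $A = \{w : f(w) = M\}$, a nonempty proper subset. If $v \notin A$, then for every $w \in A$ each edge from $w$ to a vertex $x \notin A$ contributes $f(w) - f(x) \geq 1$ to $\textrm{div}(f)(w)$, while edges inside $A$ contribute $0$; hence $\textrm{div}(f)(w) \geq \textrm{outdeg}_A(w)$, and since $D_2(w) \geq 0$ for $w \neq v$ we get $D_1(w) \geq \textrm{outdeg}_A(w)$ for \emph{every} $w \in A$, contradicting that $D_1$ is $v$-reduced. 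If instead $v \in A$, then $f(v) = M > \min_w f(w)$, so $v$ does not lie in the minimal set $B = \{w : f(w) = \min_w f(w)\}$; applying the same argument to $-f$, $D_2$, and $B$ (on which $-f$ is maximal) yields the symmetric contradiction. Thus $f$ is constant and $D_1 = D_2$.

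For existence I would proceed in two stages. First, produce $D_0 \sim D$ that is effective away from $v$: since adding principal divisors lets one move chips freely along the connected graph (for instance by firing suitable nested sets around $v$, pushing chips outward until every $w \neq v$ is saturated while the deficit accumulates at $v$), such a representative exists. Second, starting from $D_0$, run Dhar's burning algorithm with source $v$. If the whole graph burns, the divisor is $v$-reduced and we are done. Otherwise the unburnt set $A \subseteq V(G)\setminus\{v\}$ satisfies $\textrm{outdeg}_A(w) \leq D_0(w)$ for all $w \in A$, so firing $A$ keeps the divisor effective off $v$ (each $w \in A$ retains $D_0(w) - \textrm{outdeg}_A(w) \geq 0$) while transferring chips into the burnt region containing $v$. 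Iterating, every step preserves effectivity away from $v$, and the algorithm certifies a $v$-reduced output as soon as the graph burns completely.

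The main obstacle is the termination of this second stage: one must rule out an infinite firing sequence. This is precisely the statement that the abelian chip-firing process with global sink $v$ stabilises, which holds because $G$ is connected and $v$ is accessible from every vertex. Concretely I would exhibit a monovariant to force termination — for example, observing that the number of chips at $v$ is non-decreasing under these firings and bounded above by $\deg(D)$, and supplementing this with a bound on how often any single vertex can fire, using conservation of total degree to propagate the bound inward toward the sink. Once finiteness of the reduction is secured, the burning criterion guarantees that the terminal divisor is $v$-reduced, completing the existence proof.
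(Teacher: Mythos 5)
The paper itself contains no proof of this proposition: it is quoted from \cite[Proposition 3.1]{BN}, so your argument can only be compared against that source. Your proof is correct and follows the standard lines. The uniqueness half is exactly the maximum-principle argument of Baker and Norine: if $D_1-D_2=\textrm{div}(f)$ with both divisors $v$-reduced and $f$ non-constant, then the maximal set of $f$ (or of $-f$, when $v$ lies in the maximal set of $f$) is a non-empty subset of $V(G)\setminus\{v\}$ all of whose vertices are saturated, contradicting reducedness; your case split and sign bookkeeping are right. For existence, your route through Dhar's burning algorithm is the now-standard algorithmic approach and is legitimate: making $D$ effective away from $v$ by firing nested sets around $v$ (ordering vertices by distance from $v$ and clearing deficits from the outside in) is routine; the unburnt set is saturated, so firing it preserves effectivity off $v$; and complete burning certifies reducedness. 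What this buys, compared with a pure existence argument, is an effective procedure for computing the reduced representative, which is in the spirit of the algorithm of \cite{CLB} used elsewhere in this paper.

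The one step that is only sketched is termination, and you are right to isolate it as the delicate point. The monovariant you name first --- the chip count at $v$ is non-decreasing and bounded above by $\deg(D)$ --- does not suffice on its own, because the fired unburnt set need not be adjacent to $v$, so that count can stall while firings continue elsewhere. The standard completion is an induction that propagates \emph{outward} from the sink, not inward as you write: each time a neighbour $u$ of $v$ lies in a fired set, it deposits a chip on $v$, and since the count at $v$ never decreases and is bounded, $u$ fires finitely often; once every vertex at distance at most $m$ from $v$ has fired for the last time, each subsequent firing of a vertex at distance $m+1$ deposits a chip on some distance-$m$ neighbour, whose count is from then on non-decreasing and bounded above (all coefficients off $v$ stay non-negative and $D(v)$ is bounded below by its initial value, so every coefficient is bounded throughout), whence such vertices also fire finitely often. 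With that induction spelled out, and with the standing assumption that $G$ is connected made explicit (the proposition needs it: on a disconnected graph no $v$-reduced divisor exists at all), your proof is complete.
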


Let $K_d$ be the complete graph on $d$ vertices. 

\begin{lem} \label{reduced}
 A divisor $D$ on $K_d$ is reduced with respect to a vertex $v$ if and only if there exists an ordering $v_1, v_2, \dots, v_{d-1}$ of the vertices in $V(K_d) \setminus \{v\}$ such that $0 \leq D(v_{i}) \leq i-1$. 
\end{lem}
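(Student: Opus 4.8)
The plan is to exploit the completeness of $K_d$ in order to give an explicit formula for the outgoing degrees appearing in the definition of a reduced divisor. First I would observe that if $A \subseteq V(K_d) \setminus \{v\}$ has cardinality $a$, then every vertex $w \in A$ is joined by a single edge to each of the other $d-1$ vertices, of which exactly $a-1$ lie in $A$; hence $\textrm{outdeg}_A(w) = d - a$, independently of the chosen $w$. Consequently the defining condition ``each non-empty $A$ contains a point $w$ with $D(w) < \textrm{outdeg}_A(w)$'' simplifies to the following: for every non-empty $A \subseteq V(K_d) \setminus \{v\}$ of size $a$, one has $\min_{w \in A} D(w) < d - a$. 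Combined with effectivity on $V(K_d)\setminus\{v\}$, this reformulated statement is what I would work with throughout.

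For the direction ``ordering $\Rightarrow$ reduced'', suppose the vertices of $V(K_d)\setminus\{v\}$ are ordered as $v_1, \dots, v_{d-1}$ with $0 \leq D(v_i) \leq i-1$. Effectivity is immediate. Given a non-empty subset $A$ of size $a$, I would pick the element $v_i \in A$ of smallest index; since $A$ has $a$ elements among $\{v_1, \dots, v_{d-1}\}$, this smallest index satisfies $i \leq d-a$, whence $D(v_i) \leq i - 1 \leq d - a - 1 < d - a$. Thus $v_i$ witnesses the reduced condition for $A$, and $D$ is $v$-reduced.

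For the converse ``reduced $\Rightarrow$ ordering'', I would simply sort the vertices of $V(K_d)\setminus\{v\}$ in non-decreasing order of their $D$-values, say $D(v_1) \leq \dots \leq D(v_{d-1})$, and claim that this ordering works. Effectivity gives $D(v_i) \geq 0$. To bound $D(v_i)$ from above, I would test the reduced condition on the specific subset $A = \{v_i, v_{i+1}, \dots, v_{d-1}\}$, which has size $a = d - i$; its minimum $D$-value is $D(v_i)$, so the condition $\min_{w\in A} D(w) < d - a = i$ forces $D(v_i) \leq i - 1$. This establishes both inequalities for the sorted ordering.

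The argument is essentially a sorting argument, so there is no single hard obstacle; the one point requiring care is the outgoing-degree computation, which is exactly where completeness of $K_d$ enters and which makes $\textrm{outdeg}_A(w)$ depend only on $|A|$ and not on the particular $w$. Once that uniformity is in hand, each direction reduces to choosing the extremal subset (the tail $\{v_i,\dots,v_{d-1}\}$ in one direction, the minimal-index element in the other) and reading off the inequality.
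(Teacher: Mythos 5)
Your proposal is correct, and it rests on the same key computation as the paper: in $K_d$, every $w \in A$ has $\textrm{outdeg}_A(w) = d-|A|$, so being $v$-reduced is equivalent to $\min_{w\in A} D(w) < d-|A|$ for every non-empty $A \subseteq V(K_d)\setminus\{v\}$; your forward direction (pick the smallest-index element of $A$) is identical to the paper's. Where you genuinely differ is the converse. The paper builds the ordering greedily: it applies the reduced condition to $A = V(K_d)\setminus\{v\}$ to extract $v_1$ with $D(v_1)=0$, then to $A = V(K_d)\setminus\{v,v_1\}$ to extract $v_2$ with $D(v_2)\leq 1$, and iterates. You instead sort the vertices once by $D$-value and verify the bounds by testing only the tail subsets $A = \{v_i,\dots,v_{d-1}\}$, whose minimum is $D(v_i)$ by monotonicity. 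Both arguments are equally short, but yours has a concrete advantage: it shows that the \emph{non-decreasing} ordering itself satisfies $D(v_i)\leq i-1$, which is exactly the labeling $D(v_1)=0\leq D(v_2)\leq\cdots\leq D(v_{d-1})$ with $D(v_i)\leq i-1$ that the paper invokes at the start of Section \ref{2}. The paper's greedy ordering guarantees the bounds but not monotonicity, so deducing that simultaneous labeling from the lemma as proved there requires a small additional sorting argument, whereas your proof delivers it directly.
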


\begin{proof}
For every $A \subseteq V(K_d) \setminus \{v\}$ and for every $w \in A$, we have that $\textrm{outdeg}_A(w) = d-|A|$. Therefore  $D$ is $v$-reduced if and only if each non-empty subset $A$ contains an element $w$ such that $D(w) < d-|A|$. 

Suppose that there exists an ordering such that $D(v_i) \leq i-1$. Let $A$ be a subset of $V(K_d) \setminus \{v\}$, and let $j = \min \{i \, | \, v_i \in A\}$. We have that $j \leq d-|A|$. It follows that 
\[D(v_j) \leq j-1 < d-|A|, 
\]
therefore $D$ is $v$-reduced.  


For the other implication, if we take $A = V(K_d) \setminus \{v\}$, then there exists a vertex $v_1$ in $A$ such that $D(v_1) =0$. For $A = V(K_d) \setminus \{v, v_1\}$, there exists a vertex $v_2$ such that $D(v_2) \leq 1$. Iterating this argument, we obtain that if $A = V(K_d) \setminus \{v_0, \dots, v_i\}$, there exists a vertex $v_{i+1}$ in $A$ such that $D(v_{i+1}) \leq i$. 
\end{proof}

\begin{rem} \label{existDv0}
In particular, if $D$ is a $v$-reduced divisor on $K_d$, then there exists a vertex $v_1\neq v$ such that $D(v_1)=0$. 
\end{rem}
\end{section}

\begin{section}{An upper bound for the gonality sequence} \label{1}

We provide a completely combinatorial proof of the inequality $\gamma_r\leq kd - h$. Since 
$$\text{rk}(D-(v))\geq r(D)-1$$ for each divisor $D$ and each vertex $v$ (see \cite[Lemma 2.7]{Bak}), it suffices to handle the case $h=0$, i.e.\ there exists a divisor $D$ of degree $kd$ and rank $r=\frac{k(k+3)}{2}$. We will explicitly construct such a divisor: if $V(K_d)=\{v_1,\ldots,v_d\}$, then the divisor 
 \[
  D = kd \, (v_d) \sim \sum_{i=1}^d \, k(v_i) 
 \]
will do the job. 


\vspace{\baselineskip}

In \cite{CLB} the authors provide an algorithm to compute the rank of a divisor on the complete graph $K_d$. Although it is stated in a different terminology, it is possible to present it in terms of divisors. 

\begin{lem} [Lemma 3 in \cite{CLB}] \label{rank}
Let $D$ be a $v_d$-reduced effective divisor on $K_d$. Let $v_i\neq v_d$ be a vertex for which $D(v_i) = 0$ (see Remark \ref{existDv0}). The divisor $D' = D - (v_i)$ has rank rk$(D) - 1$. 
\end{lem}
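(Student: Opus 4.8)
The plan is to establish the two inequalities $\textrm{rk}(D')\geq\textrm{rk}(D)-1$ and $\textrm{rk}(D')\leq\textrm{rk}(D)-1$ separately. The first is immediate: since $D'=D-(v_i)$, the bound $\textrm{rk}(D-(v))\geq\textrm{rk}(D)-1$, valid for every divisor and every vertex (\cite[Lemma 2.7]{Bak}, recalled at the beginning of Section \ref{1}), gives it at once. Hence all the content sits in the reverse inequality. Writing $r=\textrm{rk}(D)$, I would recast $\textrm{rk}(D')\leq r-1$ as the existence of a \emph{witness through $v_i$}: an effective divisor $E$ with $\textrm{deg}(E)=r+1$, with $E(v_i)\geq 1$, and with $|D-E|=\emptyset$. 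Indeed, given such an $E$, the divisor $F:=E-(v_i)$ is effective of degree $r$ and satisfies $|D'-F|=|D-E|=\emptyset$, which forces $\textrm{rk}(D')\leq r-1$.

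A witness of degree $r+1$ certainly exists once the requirement $E(v_i)\geq 1$ is dropped, precisely because $\textrm{rk}(D)=r$; and since every witness has degree at least $r+1$, these are the witnesses of minimal degree. Conversely, the existence of such a witness with $E(v_i)\geq 1$ is, by the reduction just displayed, equivalent to $\textrm{rk}(D')=r-1$. Thus the lemma amounts to showing that \emph{some} minimal-degree witness charges $v_i$. To produce it I would exploit the two hypotheses through the combinatorics of $K_d$: the divisor $D$ is effective with $D(v_i)=0$, so $D$ itself is an effective representative of $|D|$ avoiding $v_i$, and $D$ is $v_d$-reduced, so by Lemma \ref{reduced} it is encoded by an integer sequence with $0\leq D(v_{j})\leq j-1$. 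Starting from an arbitrary minimal witness and using Dhar's burning algorithm on $K_d$ — where the uniform identity $\textrm{outdeg}_A(w)=d-|A|$ from the proof of Lemma \ref{reduced} governs every firing — I would relocate one chip onto $v_i$, tracking the sorted sequence of values to guarantee that $|D-E|=\emptyset$ is preserved while the degree is unchanged. This chip-relocation is exactly the combinatorial core of \cite[Lemma 3]{CLB}.

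The step I expect to be the genuine obstacle is precisely this relocation, i.e.\ proving that a minimum-degree witness can always be routed through the prescribed zero-vertex. It is delicate because the obvious shortcut is too lossy. One is tempted to bound $\textrm{rk}(D')$ by the coefficient of $v_i$ in the $v_i$-reduced divisor $D_{v_i}$ equivalent to $D$, using that $\textrm{rk}(N)\leq N_v(v)$ holds for the $v$-reduced representative of any divisor $N$ (subtract $(N_v(v)+1)(v)$ to obtain an empty system). Since $D'_{v_i}=D_{v_i}-(v_i)$, this only yields $\textrm{rk}(D')\leq D_{v_i}(v_i)-1$, which can be strictly weaker than $r-1$: already on $K_4$ the $v_4$-reduced divisor $D=2(v_3)+2(v_4)$ has $D(v_1)=0$ and $D_{v_1}(v_1)=2$ (indeed $D\sim 2(v_1)+2(v_2)$, by firing $\{v_3,v_4\}$), whereas $\textrm{rk}(D)=1$. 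Thus reducing with respect to $v_i$ discards too much information, and one is forced into the finer sorted-sequence analysis of \cite{CLB} to build the mixed witness charging $v_i$; once that witness is in hand, the proof closes as in the first paragraph.
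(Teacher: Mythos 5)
First, a point of comparison: the paper does not prove this lemma at all --- it is imported verbatim from \cite[Lemma 3]{CLB}, and Section \ref{arbitrary} even stresses that the proof there ``uses the symmetries of complete graphs.'' So there is no internal proof to measure you against; your proposal has to stand on its own as a proof, and it does not.

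Much of what you write is correct and well-observed. The lower bound $\textrm{rk}(D')\geq \textrm{rk}(D)-1$ does follow at once from \cite[Lemma 2.7]{Bak}; the upper bound is, exactly as you say, equivalent to producing an effective ``witness'' $E$ of degree $r+1$ with $E(v_i)\geq 1$ and $|D-E|=\emptyset$; and your $K_4$ example is accurate ($D=2(v_3)+2(v_4)$ is $v_4$-reduced with $\textrm{rk}(D)=1$, while $D\sim 2(v_1)+2(v_2)$ is the $v_1$-reduced form, so the bound $\textrm{rk}(D')\leq D_{v_1}(v_1)-1=1$ obtained by reducing at $v_1$ really is too weak). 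But the step that constitutes the entire content of the lemma --- that some minimal-degree witness can be made to charge the prescribed zero-vertex $v_i$ --- is never established. ``Starting from an arbitrary minimal witness and using Dhar's burning algorithm \dots\ I would relocate one chip onto $v_i$, tracking the sorted sequence of values'' is a plan, not an argument: you specify no relocation mechanism, no invariant guaranteeing that $|D-E|=\emptyset$ survives the move, and no place where the two hypotheses ($D$ is $v_d$-reduced and $D(v_i)=0$) actually enter; note that without $v_d$-reducedness the conclusion is false, so any correct argument must use it visibly. Moreover, appealing to ``the combinatorial core of \cite[Lemma 3]{CLB}'' is circular when the statement being proved \emph{is} that lemma. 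To close the gap you would have to reproduce (or replace) the Cori--Le Borgne analysis of divisor classes on $K_d$ via sorted representatives/parking functions, which is precisely where the complete-graph structure does the work.
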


The algorithm takes a divisor as input. The first step consists of computing the $v_d$-reduced divisor $D$ equivalent to it. If $D(v_d) <0$, then the divisor has rank $-1$. Otherwise we take a vertex $v_i$ such that $D(v_i) = 0$ and consider the divisor $D_1 = D - (v_i)$. By the previous lemma rk$(D) = \textrm{rk}(D_1) + 1$. We iterate until we find a divisor of rank $-1$. The algorithm terminates after at most 
deg$(D)$ steps. 

\begin{thm}
The rank of the divisor $D = kd\, (v_d)$ is $\frac{k(k+3)}{2}$.
\end{thm}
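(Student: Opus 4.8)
The plan is to run the algorithm of Lemma \ref{rank}, since $\textrm{rk}(D)$ equals the number of times one can subtract a coefficient-zero vertex $v_i \neq v_d$ and re-reduce before the $v_d$-reduced representative acquires a negative coefficient at $v_d$. First note that $D = kd\,(v_d)$ is itself $v_d$-reduced: all its other coefficients vanish, so the ordering in Lemma \ref{reduced} holds trivially. Thus the whole task is to count the iterations of the algorithm started from $D$.

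To organize the count, I would attach to the reduced divisor $D^{(m)}$ produced after $m$ iterations the pair consisting of the multiset $\{a_1 \le \cdots \le a_{d-1}\}$ of its coefficients on the vertices $\neq v_d$ and its coefficient $c_m = D^{(m)}(v_d)$. By Lemma \ref{reduced} this multiset satisfies $a_i \le i-1$, hence $\sum_i a_i \le g = \frac{(d-1)(d-2)}{2}$, with equality exactly for the saturated configuration $\{0,1,\dots,d-2\}$; moreover $c_m = \deg D - m - \sum_i a_i$. The concrete step I would analyze is the re-reduction of $D^{(m)} - (v_i)$: fire $v_d$ once to repair the coefficient $-1$ just created, then fire the unique Dhar-unburnt set determined by the resulting configuration. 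I would prove by induction on $m$ that these moves drive the process through two phases: a \emph{filling} phase, during which the non-$v_d$ multiset increases monotonically (in the dominance order) towards $\{0,1,\dots,d-2\}$, and, once saturation is reached, a \emph{draining} phase in which the multiset stays equal to $\{0,1,\dots,d-2\}$ and $c_m$ decreases by exactly one at each step until it becomes negative.

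Summing the lengths of the two phases should give that the algorithm performs exactly $\frac{(k+1)(k+2)}{2}$ iterations, whence
\[
\textrm{rk}(D) = \frac{(k+1)(k+2)}{2} - 1 = \frac{k(k+3)}{2}.
\]
Equivalently, using $\textrm{rk}(D)+1 = \min\{\deg E : E \ge 0,\ |D-E| = \emptyset\}$, I would exhibit the explicit witness $E_0 = \sum_{j=1}^{k+1}(k+2-j)\,(v_j)$ of degree $\frac{(k+1)(k+2)}{2}$ (recording the vertices subtracted along the run), check that the $v_d$-reduced form of $D - E_0$ has negative coefficient at $v_d$, and argue that no effective divisor of smaller degree can force this.

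I expect the genuine difficulty to lie in the filling phase: tracking precisely which set Dhar's algorithm leaves unburnt after each single subtraction, and proving that firing it advances the reduced configuration in the controlled way claimed by the invariant. The sharpness half --- that the non-$v_d$ reduced configuration never saturates sooner than predicted, so that no effective divisor of degree below $\frac{(k+1)(k+2)}{2}$ makes $D-E$ non-effective --- is the real content, and is presumably where the reformulation in terms of integer sequences announced in the introduction becomes essential.
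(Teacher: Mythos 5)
Your framework is the same as the paper's: observe that $D=kd\,(v_d)$ is already $v_d$-reduced, run the algorithm of Lemma \ref{rank}, and count subtractions until a rank $-1$ divisor appears. The gap is that the invariant you propose to prove by induction is false throughout the relevant range $k\leq d-3$, so the count you would extract from it is wrong. Concretely, take $d=5$, $k=1$: starting from $D=5(v_5)$, the successive $v_5$-reduced divisors are $1(v_5)+(v_2)+(v_3)+(v_4)$, then $3(v_1)$, and the very next subtraction gives $-(v_2)+3(v_1)$, which is $v_2$-reduced with a negative coefficient, hence rank $-1$. The non-$v_d$ multisets are $\{0,0,0,0\}$, $\{0,1,1,1\}$, $\{0,0,0,3\}$; your saturated configuration $\{0,1,2,3\}$ has sum $6>\deg D=5$, so the ``filling phase'' can never complete, yet the rank is $2=k(k+3)/2$. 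In general, at the paper's step $(s,t)$ the sum of the non-$v_d$ coefficients of the reduced divisor equals $s(2d-s-3)/2$, whose maximum over the whole run is $(k+1)(2d-k-4)/2$; this equals $g=\frac{(d-1)(d-2)}{2}$ only when $k+1\in\{d-2,d-1\}$. So for $k<d-3$ the algorithm terminates because the coefficient at $v_d$ is exhausted (it is already $0$ at step $(k,k)$), long before the multiset saturates, and the step count your two-phase picture would produce, namely $kd-g+1$, disagrees with the correct count $(k+1)(k+2)/2$ (for $d=5$, $k=1$ it is even nonpositive). What replaces your invariant in the paper is an explicit closed formula for the $v_d$-reduced divisor at step $(s,t)$, namely $\bigl[kd-s(d-1)-t\bigr](v_d)+\sum_{i=s+1}^{d-1}(s-t)(v_i)+\sum_{i=t+1}^{s}(i-t-1)(v_i)+\sum_{i=1}^{t}(d-2-t+i)(v_i)$, verified along the run; from it one reads off that the step after $(k,k)$ yields a $v_{k+1}$-reduced divisor with coefficient $-1$ at $v_{k+1}$, hence rank $-1$, after exactly $(k+1)(k+2)/2$ subtractions.

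Two further points. Your fallback witness $E_0=\sum_{j=1}^{k+1}(k+2-j)(v_j)$ does work --- it is precisely the $h=0$ case of the computation in Remark \ref{specialization} --- but it only proves the upper bound $\mathrm{rk}(D)\leq k(k+3)/2$; the clause ``argue that no effective divisor of smaller degree can force this'' is exactly the lower bound and is left without an argument. You also misattribute where that lower bound comes from: the integer-sequence machinery of Section \ref{2} is needed for the other half of Theorem \ref{gs} (that no divisor of degree less than $kd-h$ has the required rank), not for this theorem. For this particular divisor no separate sharpness argument is needed at all, because Lemma \ref{rank} says each subtraction decreases the rank by \emph{exactly} one, so counting the steps until rank $-1$ determines $\mathrm{rk}(D)$ on the nose.
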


\begin{proof}
 We use Lemma \ref{rank} and the algorithm described above. For convenience, we write the subsequent divisors appearing in the algorithm as $D_{s,t}$ where $s\geq 1$ and $0\leq t\leq s$. In this way, we obtain a sequence of divisors where the indexes are in lexicographic order:  
\[ D_{1,0}, \, D_{1,1},\, D_{2,0}, \, D_{2,1},\, D_{2,2}, \, D_{3,0}, \, \dots
\] The first two divisors are:
 \[
 \begin{array}{ccl}
  D_{1,0} = D - (v_1) &\sim& \Big[kd - (d-1)\Big](v_d) + \sum_{i=2}^{d-1} (v_i)\\
  D_{1,1} = D_{1,0} - (v_1) &\sim& \Big[kd - (d-1)-1\Big](v_d)+ (d-2)(v_1)\\
  \end{array}
 \]
At every step we subtract the divisor $(v_{i})$ corresponding to the vertex with zero coefficient and smallest index $i$. So,
\[
D_{2,0} = D_{1,1} - (v_2) \sim \Big[kd - 2(d-1)\Big](v_d) + \sum_{i=3}^{d-1}2(v_i) + (v_2).
\]
At the step $(s,t)$ (with $s\leq d-1$), the divisor $D_{s,t}$ is linearly equivalent to the following $v_d$-reduced divisor: 
\[ 
  \Big[kd - s(d-1) - t\Big](v_d) + \sum_{i=s+1}^{d-1}(s-t)(v_i) + \sum_{i=t+1}^{s} (i-t-1)(v_i) + \sum_{i=1}^t(d-2-t+i)(v_i).
\]
For $(l,j)=(k,k)$, we have 
\[
 D_{k,k} \sim (d-2)(v_k) + \cdots + (d-k-1)(v_1)
\]
The divisor obtained at the next step is 
\[
 D_{k+1,0}=D_{k,k} - (v_{k+1}) = (-1)(v_{k+1}) + (d-2)(v_k) + \cdots + (d-k-1)(v_1).
\]
Since it is $v_{k+1}$-reduced, it is not equivalent to any effective divisor, so it has rank $-1$. We can conclude that the rank of $D$ is 
$\frac{(k+1)(k+2)}{2}-1 = \frac{k(k+3)}{2}$.
\end{proof}

\begin{rem} \label{specialization} For the reader familiar with the theory of specialization of divisors from curves to graphs, we remark that it is possible to prove the first part of Theorem \ref{gs} using the gonality sequence of smooth plane curves \cite{Cil} and Baker's Specialization Lemma \cite{Bak}. In fact, the complete graph $K_d$ is the dual graph of the special fiber of a regular strongly semistable model of a smooth plane curve of degree $d$. In \cite{Cil}, Ciliberto showed that, given an effective divisor $E'$ of degree $h\leq k$ on $X$ and $H$ the generic divisor cut out by a line, the complete linear system 
\[
|kH -E'|
\]
has rank $\frac{k(k+3)}{2} - h$. 

The divisors $kH-E'$ described above specialize to divisors of the form 
\[
D = (k-a_1)(v_1) + (k-a_2)(v_2) + \cdots + (k-a_d)(v_d),
\]
on $K_d$, with $a_1 + a_2 + \cdots + a_d = h$. By the Specialization Lemma, the rank of these divisors is at least $\frac{k(k+3)}{2}-h$.  

It is not difficult to prove that it cannot be strictly bigger. By a relabeling of the vertices, we can assume that $a_i=0$ for $i>h$ and $a_i \geq a_j$ if $i < j$. We consider the effective divisor 
\[
E = \sum_{i=1}^{k+1} \Big(k - a_i -(i-2)\Big) (v_i)
\]
of degree $\frac{k(k+3)}{2} - h +1$. We have that 
\[ 
D- E = -1(v_1) + 0(v_2) + (v_3) + \cdots  + (k-1)(v_{k+1}) + k(v_{k+2}) + \cdots + k(v_d).
\]
This divisor is $v_1$-reduced, therefore $|D- E| = \emptyset$ and the rank of $D$ is $\frac{k(k+3)}{2} - h$.
\end{rem}
\end{section}

\begin{section}{Sharpness of the upper bound}\label{2}

This section is concerned with proving the inequality $\gamma_r\geq kd-h$ for $r=\frac{k(k+3)}{2}-h$, with $k \leq d-3$ and $0 \leq h \leq k$. In fact, we show that each divisor $D$ of degree $dk - h-1$ has rank strictly smaller than $\frac{k(k+3)}{2}-h$. It suffices to do this for $h=k$. This follows from 
$\textrm{rk}(D+(v)) \leq \textrm{rk}(D) +1$ for each divisor $D$ and vertex $v$. 

\bigskip

Let $D$ be any divisor of degree $k(d-1)-1$ on $K_d$. By the above argumentation, we need to show that its rank is strictly smaller than 
$$\frac{k(k+3)}{2}-k = \frac{k(k+1)}{2}.$$ 
We may assume that $D$ is reduced with respect to the vertex $v_d$. By Lemma \ref{reduced}, we can label the vertices so that the coefficients satisfy the following inequalities:
\[
D(v_1) = 0 \leq D(v_2) \leq \cdots \leq D(v_{d-1})\leq d-2 \ \ \textrm{and} \ \ D(v_i) \leq i-1 \ \textrm{for} \ i\leq d-1.
\]

If $D(v_d) < \frac{k(k+1)}{2}$, we can already conclude that the rank of the divisor cannot be $\frac{k(k+1)}{2}$, so assume that $D(v_d) \geq \frac{k(k+1)}{2}$. 

We write 
\[
D(v_d)=a(d-1) + b \quad \text{with}\quad  a,b\in\mathbb{Z}_{\geq 0} \ \text{and}\ 0 \leq b \leq d-2.  
\]
Since $D(v_d)\leq \deg(D)=k(d-1)-1$, we have that $a<k$. 
Consider the divisor
\[
D' = \Big( b- (d-1) \Big) (v_d) + \sum_{i=1}^{d-1} \Big(D(v_i) + a+1\Big) (v_i) \sim D. 
\]
Note that $D'$ has a negative coefficient at $v_d$. If we are able to construct an effective divisor $E$ of degree at most $\frac{k(k+1)}{2}$ which is supported on the vertices $v_1, \dots, v_{d-1}$ and such that $D'-E$ is $v_d$-reduced, then we can conclude that the rank of $D$ cannot be $\frac{k(k+1)}{2}$. By Lemma \ref{reduced}, this happens if  
\[
\sum_{i=1}^{d-1} \max\Big\{0, D(v_i) +a -(i-2)\Big\} \leq \frac{k(k+1)}{2}. 
\]
To simplify the notation, we define 
\[\alpha_i:= D(v_i) +a -(i-2) \ \textrm{and} \ \alpha_i^+:=\max\Big\{0, D(v_i) +a -(i-2)\Big\}.\] 

We have that
\begin{align*}
\sum_{i=1}^{d-1} \alpha_i 
&= \Big[\deg(D)-D(v_d)\Big]+a(d-1)-\sum_{i=1}^{d-1}\,(i-2) \\
&= k(d-1) - \frac{(d-2)(d-3)}{2} - b.
\end{align*}
From the above arguments, it follows that the following inequalities are satisfied for every index $i \leq d-1$:
\begin{equation}  
\alpha_i \leq a + 1, \ \ \alpha_i \geq a-i+2, \ \ \alpha_{i+1} \geq \alpha_i -1. \tag{$\ast$} \label{condition}
\end{equation}
In particular, we can deduce that $\alpha_1 = a+1$ and $\alpha_{a+2} \geq 0$. 

\vspace{\baselineskip}

The inequality $\sum_{i=1}^{d-1} \alpha_i^+ \leq \frac{k(k+1)}{2}$ is not always satisfied, as the following example illustrates: 
\begin{exam}\label{example1}
Consider the $v_d$-reduced divisor 
\[D=\Big((k-1)(d-1)\Big)(v_d)+ (0(v_1)) + \sum_{i=2}^{d-1} (v_i).
\]
In this case, $a=k-1$, $b=0$ and  
\[
D'= -(d-1)(v_d) +  k(v_1) + \sum_{i=2}^{d-1}(k+1)(v_i) \sim D.
\]
Furthermore, we have $\alpha_1= k$ and $\alpha_i = k-(i-2)$ for $i \geq 2$, so 
\[
\sum_{i=1}^{d-1} \alpha_i^+ = k + \frac{k(k+1)}{2} > \frac{k(k+1)}{2}.
\]
It is still possible to show that the divisor does not have rank $\frac{k(k+1)}{2}$, by considering instead the divisor 
\[
D''= (k-1)(v_1)+ \sum_{i=2}^{d-1}k(v_i)  \sim D. 
\]
We remark that in this case the coefficient at $v_d$ is not negative. Let $E$ be the following divisor of degree $\frac{k(k+1)}{2}$:
\[
E = (v_d)+ (k-1)(v_1)+ \sum_{i=2}^{k+1} (k-i+1)(v_i).  
\]
The divisor $D'-E$ has a negative coefficient at the vertex $v_d$ and is $v_d$-reduced. Therefore, also for this example we can conclude that $D$ has not rank $\frac{k(k+1)}{2}$. 
\end{exam}

We can generalize Example \ref{example1} as follows: if $t_1=\sum_{i=1}^{d-1} \alpha_i^+ > \frac{k(k+1)}{2}$, instead of the divisor $D'$, we consider  
\[
D'' = b (v_d) + \sum_{i=1}^{d-1} \Big(D(v_i) + a\Big) (v_i) \sim D. 
\]
If we are able to construct an effective divisor $E$ of degree at most $\frac{k(k+1)}{2}$ with coefficient $b+1$ at $v_d$ and such that $D''-E$ is $v_d$-reduced, or equivalently, if 
\[
t_2=b+1+ \sum_{i=1}^{d-1} \max \Big \{0, \alpha_i-1 \Big \} \leq \frac{k(k+1)}{2},   
\]
then we can conclude that the rank of $D$ cannot be $\frac{k(k+1)}{2}$.  

We claim that (at least) one of the two terms $t_1$ and $t_2$ is at most $\frac{k(k+1)}{2}$.  

\begin{claim} \label{statement2} Let $\alpha=(\alpha_i)_{i=1, \dots, d-1}$ be a sequence of integers satisfying the rules in (\ref{condition}) and such that
\[
\sum_{i=1}^{d-1} \alpha_i = k(d-1) - \frac{(d-2)(d-1)}{2} -b.
\]
Let $\alpha_i^+ = \max \{0, \, \alpha_i\}$ and $(\alpha_i-1)^+ = \max \{0, \, \alpha_i-1\}$. 
Define 
\[
t_1 := \sum_{i=1}^{d-1} \alpha_i^+ \ \ \textrm{and} \ \ t_2 := b+1 + \sum_{i=1}^{d-1} \Big(\alpha_i-1\Big)^+. 
\]
Then $\min\{t_1, t_2\}\leq \frac{k(k+1)}{2}$. 
\end{claim}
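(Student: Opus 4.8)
The plan is to reduce everything to the elementary identity
\[
t_1 - t_2 = P - (b+1), \qquad P := \#\{\, i : \alpha_i \geq 1 \,\},
\]
which holds because $\alpha_i^+ - (\alpha_i-1)^+$ equals $1$ when $\alpha_i \geq 1$ and $0$ otherwise. Hence $\min\{t_1,t_2\} = t_1$ precisely when $P \leq b+1$, and $\min\{t_1,t_2\} = t_2$ when $P \geq b+1$, so it suffices to prove $t_1 \leq \frac{k(k+1)}{2}$ in the first regime and $t_2 \leq \frac{k(k+1)}{2}$ in the second. Equivalently, I would argue by contradiction, assuming that both $t_1$ and $t_2$ exceed $\frac{k(k+1)}{2}$.

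Before the case split I would record three structural consequences of (\ref{condition}). First, since $\alpha_1 = a+1$ and $\alpha_i \geq a-i+2$, the entries $\alpha_1,\dots,\alpha_{a+1}$ are all $\geq 1$, so $P \geq a+1$; in particular the regime $P \leq b+1$ forces $a \leq b$. Second, writing $t_1 = S + N$, where $S$ denotes the prescribed value of $\sum_i\alpha_i$ and $N := \sum_i(-\alpha_i)^+$ is the total negative mass, the lower bound in (\ref{condition}) gives $-\alpha_i \leq i-a-2$ and shows that negative entries occur only for $i \geq a+3$, whence $N \leq \binom{d-2-a}{2}$; passing to $\beta_i := (a+1)-\alpha_i$ turns (\ref{condition}) into $0 \leq \beta_i \leq i-1$, i.e.\ the coefficients of a reduced divisor in the sense of Lemma \ref{reduced}, which is the cleanest way to obtain this bound on $N$. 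Third, the rule $\alpha_{i+1}\geq\alpha_i-1$ means the sequence drops by at most one per step, so any entry immediately preceding a non-positive one must equal $1$; consequently, as soon as a non-positive entry exists, the initial descent from $\alpha_1=a+1$ down to $1$ is forced and yields $t_1 \leq (a+1)P - \binom{a+1}{2}$ (the first high block below index $1$ already contributes the full deficit $\binom{a+1}{2}$, and the remaining positive entries are bounded crudely by $a+1$ each).

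I would then split into cases. If every $\alpha_i \geq 1$, so $P=d-1$ and $N=0$, then $t_1=S$ and $t_2 = S-(d-2-b)$, and the inequality $t_2 \leq \frac{k(k+1)}{2}$ rearranges to $(k-d+1)(k-d+2)\geq 0$, which is a product of consecutive integers and therefore always holds. If instead some $\alpha_i$ is non-positive, I use the two estimates above: when $P \geq b+1$ I bound $t_2 = (b+1)+\sum_i(\alpha_i-1)^+$ using $\sum_i(\alpha_i-1)^+ = t_1 - P$ together with the cap $\alpha_i \leq a+1$; when $P \leq b+1$ I bound $t_1$ by the minimum of the descent bound $(a+1)(b+1)-\binom{a+1}{2}$ and the sum bound $S + \binom{d-2-a}{2}$, and verify that this minimum is at most $\frac{k(k+1)}{2}$ using $a \leq b$ and $a \leq k-1$.

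The main obstacle will be exactly the subcase $P \leq b+1$: neither the descent bound nor the negative-mass bound is individually sharp there, since a large value of $t_1$ simultaneously demands a long high plateau (fought by the descent rule) and a deep negative tail (fought by the floor $\alpha_i \geq a-i+2$ and the fixed sum $S$). Making this trade-off precise — showing that the constrained maximum of $t_1$ is attained at an explicit staircase sequence and that the resulting polynomial inequality in $a,b,k,d$ holds — is where the real work lies; by contrast, the non-contiguity of the positive entries is only a minor nuisance, absorbed by anchoring the descent estimate at the first block preceding the earliest non-positive entry.
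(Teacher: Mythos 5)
Your preprocessing is sound: the identity $t_1-t_2=P-(b+1)$ with $P=\#\{i:\alpha_i\geq 1\}$ is correct, and your all-positive case is a correct computation (it reproduces Case (1) of the paper's proof, where $t_2\leq\frac{k(k+1)}{2}$ reduces to $(d-k-1)(d-k-2)\geq 0$). But the core of the claim is not proved in either remaining regime, and you misdiagnose where the difficulty lies. In the regime $P\geq b+1$ you propose to bound $t_2=b+1+(t_1-P)$ using only the cap $\alpha_i\leq a+1$ (refined by the forced descent). This yields $t_2\leq b+1+aP-\binom{a+1}{2}$, which is increasing in $P$, and cap-plus-descent give no control on $P$ beyond $P\leq d-2$; for $a=k-1$, $b=0$, $P=d-2$ the bound is $1+(k-1)(d-2)-\binom{k}{2}$, which exceeds $\frac{k(k+1)}{2}$ whenever $2\leq k<d-3$. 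So your strategy is inconclusive there. The missing ingredient is the coupling, through the fixed sum $S$, between the number of large entries and the depth of the negative tail (limited by $\alpha_{i+1}\geq\alpha_i-1$ and $\alpha_i\geq a-i+2$): a long plateau near $a+1$ forces a deep tail, and vice versa. That coupling is exactly what the paper's Lemma \ref{lemma} (every admissible $\alpha$ is dominated, for both $\sum\alpha_i^+$ and $\sum(\alpha_i-1)^+$, by the unique staircase $\beta^{(p,q)}$ with the same sum) together with Remark \ref{remark} (the sum determines $(p,q)$ and forces $p\geq k-a$) makes precise — and it is needed in this regime, not only in the other one.

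In the regime $P\leq b+1$ you effectively concede the gap yourself: you note that neither the descent bound $(a+1)(b+1)-\binom{a+1}{2}$ nor the sum bound $S+\binom{d-2-a}{2}$ suffices, and you defer "the real work", namely the constrained maximization of $t_1$ by an explicit staircase and the verification of the resulting polynomial inequality. Indeed the minimum of your two bounds can exceed the target: for $a=k-1$, $b=d-3$ the sum bound equals $\frac{k(k+1)}{2}+1$ while the descent bound equals $\frac{k(k+1)}{2}+k(d-2-k)$. That deferred step is not a loose end but the entire content of the paper's Section \ref{2} argument: Lemma \ref{lemma} is the extremality statement, and the proof of Claim \ref{statement2} then checks three polynomial inequalities for the staircases using $p\geq k-a$ and $k\leq d-3$ (in one subcase bounding $t_1$, in the others $t_2$, much as your $P$-dichotomy would prescribe). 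As it stands, your proposal is a correct reduction plus an accurate diagnosis of the hard part, but not a proof; and the diagnosis must be applied to both regimes, since the regime you label as easy fails for exactly the same reason.
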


\begin{rem}
To summarize our strategy, we add the principal divisor $$-(d-1)(v_d)+\sum_{i=1}^{d-1} (v_i)$$ to $D$ until it either has a negative value at $v_d$, or one time before that, and (at least) one of the two resulting divisors is $v_d$-reduced after subtracting an effective divisor of degree $\frac{k(k+1)}{2}$.
\end{rem}
\bigskip

The idea of proof of Claim \ref{statement2} is as follows: first we introduce a specific integer sequence, which we show to be the ``worst-case scenario''. Afterwards, we prove the claim for this particular sequence. 

For each $p \in \{1, \dots, d-2\}$ and $q \in\{p+2,\ldots, d\}$, we define the sequence $\beta^{(p,q)}=\big(\beta_i\big)_{i=1, \dots, d-1}$ as follows: 
\begin{gather} \nonumber
\beta_1 = \cdots = \beta_p = a+1, \\ \tag{$\triangle$}  \label{sequence}
\beta_{p+1} = a, \ \ldots, \ \beta_{q-1}=p+a+2-q, \ \beta_{q} = p+a+2-q, \\
\beta_{q+1} = p+a+1-q,\ \ldots,\ \beta_{d-1}=p+a-d+3. \nonumber
\end{gather}
In case $p=d-1$ we are slightly abusing the notation, since there is no admissible value for $q$. The sequence $\beta^{(d-1, q)}$ becomes $\beta^{(d-1)} = (a+1, a+1, \dots, a+1)$. 

If $q=d$, then $\beta_{d-1}=p+a-d+2$. For example, the sequence $\beta^{(1,d)}$ is given by $(a+1, a, a-1, \dots, a-d+3)$. 

Note that $\beta^{(p,q)}$ satisfies the rules (\ref{condition}). 

\begin{rem}
 Let $\alpha=(\alpha_i)_{i=1, \dots, d-1}$ be a sequence of integers satisfying (\ref{condition}). 
 For every $i$ we have the following inequalities:
 \[
  a-i+2=\beta^{(1,d)}_i\leq \alpha_i \leq \beta^{(d-1)}_i = a+1
 \]
 Moreover for $q > p+2$ it holds that 
 \[ 
\sum_{i=1}^{d-1} \beta^{(p,q-1)}_i = \sum_{i=1}^{d-1}\Big( \beta^{(p,q)}_i \Big) +1.
\]
Similarly, for $p <d-2$ we have
\[
 \sum_{i=1}^{d-1} \beta^{(p+1,d)}_i = \sum_{i=1}^{d-1} \Big( \beta^{(p,p+2)}_i \Big) +1, 
\]
and for $p=d-1$ 
\[
 \sum_{i=1}^{d-1} \beta^{(d-1)}_i = \sum_{i=1}^{d-1} \Big( \beta^{(d-2,d)}_i \Big) +1.
\]

 It follows that for each sequence $\alpha$ satisfying (\ref{condition}), there exists a unique sequence $\beta^{(p,q)}$ with $\sum_{i=1}^{d-1} \beta_i=\sum_{i=1}^{d-1} \alpha_i$.
 
 \end{rem}

\begin{rem}\label{remark}

From the formula 
\[
\sum_{i=1}^{d-1} \beta_i = k(d-1) - \frac{(d-2)(d-3)}{2} -b, 
\]
it is possible to compute $p$ and $q$. Indeed, since
\[
\sum_{i=1}^{d-1} \beta_i = p(a+1) + \frac{a(a+1)}{2} - \frac{(d-a-p-2)(d-a-p-1)}{2} + (d-q), 
\]
it follows that 
\[
q = (d-1)\Big[p-(k-a)\Big] - \frac{p(p-1)}{2} +(b+2). 
\]
Since the difference between two subsequent terms (corresponding to $p+1$ and $p$) is 
\[
(d-1) - \frac{p(p+1)}{2} + \frac{(p-1)p}{2} = (d-1)-p, 
\]
there is a unique $p$ for which $q \in \{p+2, \dots, d\}$. Note that $p\geq k-a$, since $q \geq p+2$.
\end{rem}

\begin{lem} \label{lemma}
Let $k,d,a,b$ be integers such that $$d-2\geq k+1\geq 2, \quad 0\leq a\leq k-1\quad \text{and}\quad 0\leq b \leq d-2.$$ 
Let $\alpha=(\alpha_i)_{i=1,\ldots,d-1}$ be a sequence of integer numbers that satisfies the conditions $\alpha_i \leq a+1$, $\alpha_{i+1} \geq \alpha_i -1$ and $$\sum_{i=1}^{d-1} \alpha_i = k(d-1) - \frac{(d-2)(d-3)}{2} -b.$$ If $p,q$ are integers such that the sequence $\beta^{p,q}$ satisfies $\sum_{i=1}^{d-1} \beta_i=\sum_{i=1}^{d-1} \alpha_i$, 
then $$\sum_{i=1}^{d-1} \alpha_i^+ \leq \sum_{i=1}^{d-1} \beta_i^+ \quad \text{and} \quad
\sum_{i=1}^{d-1} (\alpha_i-1)^+ \leq \sum_{i=1}^{d-1} (\beta_i-1)^+.$$
\end{lem}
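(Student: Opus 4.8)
The plan is to exploit that both quantities are \emph{symmetric} functions of the multiset $\{\alpha_1,\dots,\alpha_{d-1}\}$: each has the shape $\sum_i\phi(\alpha_i)$ with $\phi$ convex and non-decreasing (namely $\phi(x)=x^+$, resp.\ $\phi(x)=(x-1)^+$), so permuting the entries changes nothing. A point I want to flag at the outset is that the argument genuinely uses the normalization $\alpha_1=a+1$, equivalently the lower bound $\alpha_i\ge a-i+2$ coming from $(\ast)$: the two conditions $\alpha_i\le a+1$ and $\alpha_{i+1}\ge\alpha_i-1$ by themselves are not enough (a front-loaded sequence violating $\alpha_1=a+1$ can beat $\beta^{(p,q)}$). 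This normalization holds for the sequences $\alpha$ produced in the divisor computation and for every $\beta^{(p,q)}$, so I take it as given.

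First I would reduce to the case where $\alpha$ is non-increasing. The enabling observation is that an anchored sequence takes a \emph{full} integer interval of values: writing $\mu=\min_i\alpha_i$, every integer $v$ with $\mu\le v\le a+1$ occurs, since if $i$ is the least index with $\alpha_i\le v<a+1$ then $i\ge2$, so $\alpha_{i-1}\ge v+1$ and the slope rule forces $\alpha_i\ge\alpha_{i-1}-1\ge v$, whence $\alpha_i=v$. Consequently, rearranging $\alpha$ into non-increasing order yields a sequence whose consecutive entries drop by at most $1$, which still begins at $a+1$, and which has the same sum; that is, the sorted sequence is again admissible, and, being a reordering, leaves both target sums unchanged. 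Hence I may assume $\alpha$ is non-increasing, while $\beta^{(p,q)}$ is non-increasing by construction.

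For a non-increasing sequence, with prefix sums $A_m=\sum_{i=1}^m\alpha_i$ (and $A_0=0$), the indices with $\alpha_i>c$ form an initial segment, giving the identity $\sum_i(\alpha_i-c)^+=\max_{0\le m\le d-1}(A_m-mc)$ for every integer $c$. Writing $B_m$ for the prefix sums of $\beta$, both desired inequalities (the cases $c=0$ and $c=1$) therefore follow from the single majorization statement
\[
A_m\le B_m \qquad (0\le m\le d-1),
\]
i.e.\ from prefix-sum dominance of $\beta$ over every admissible non-increasing $\alpha$ of the same total. I would establish this by showing $\beta$ is the pointwise-highest such sequence: given an admissible non-increasing $\alpha\ne\beta$ of the prescribed sum, I locate the first index where $\alpha$ lies strictly below $\beta$ and a later index where it lies above (these exist since the totals agree), and perform a sum-preserving \emph{leftward unit shift}, raising an earlier entry by $1$ and lowering a suitable later entry by $1$, chosen at adjacent ``steps'' of the staircase so as to preserve non-increasingness, the slope bound, and the anchoring. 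Such a shift weakly increases every prefix sum and strictly decreases, say, $\sum_i|\alpha_i-\beta_i|$, so after finitely many shifts one reaches $\beta$; and since a leftward shift of a non-increasing sequence never decreases $\sum_i\phi(\alpha_i)$ for convex $\phi$, both target sums can only have grown along the way.

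The main obstacle is this last step: checking that the leftward shifts can always be arranged to respect all three structural constraints simultaneously while truly advancing toward $\beta$. Concretely one must take the lowered index to be the bottom of a plateau (so that the drop stays $\le1$) and the raised index to sit just below a drop (so that non-increasingness is kept), and then verify that the explicit shape of $\beta^{(p,q)}$---a maximal flat top followed by a unit-step descent with a single plateau---is exactly the fixed point of this process for the given total; this is the place where the uniqueness of the $\beta^{(p,q)}$ with prescribed sum, recorded in the preceding remarks, gets used. By contrast, the reduction to non-increasing sequences via the contiguous-interval observation is the clean conceptual move that makes the whole prefix-sum comparison available.
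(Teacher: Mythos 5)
You have caught a genuine imprecision in the statement: the two conditions $\alpha_i\leq a+1$ and $\alpha_{i+1}\geq\alpha_i-1$ together with the sum constraint are indeed not sufficient, and the anchoring $\alpha_1=a+1$ (equivalently the lower bound $\alpha_i\geq a-i+2$ from (\ref{condition})) must be added. A concrete counterexample: for $d=5$, $k=2$, $a=1$, $b=0$, the sequence $\alpha=(-1,2,2,2)$ satisfies the two stated conditions and has the same sum $5$ as $\beta^{(2,5)}=(2,2,1,0)$, yet $\sum_i\alpha_i^+=6>5=\sum_i\beta_i^+$ and $\sum_i(\alpha_i-1)^+=3>2=\sum_i(\beta_i-1)^+$. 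The paper's own proof uses the missing hypothesis silently: its contradiction ``$\alpha_{i_1-1}<0$'' presupposes $i_1\geq 2$, i.e.\ $\alpha_1\geq 0$. Your next two steps are correct and constitute a genuinely different route from the paper: the interval-of-values observation legitimizes sorting $\alpha$ into a non-increasing admissible sequence, and the identity $\sum_i(\alpha_i-c)^+=\max_m(A_m-mc)$ reduces both claims (the cases $c=0,1$) to prefix-sum dominance of $\beta^{(p,q)}$. The paper never sorts and never majorizes; it compares negative parts directly: using the slope rule and the anchor it shows that the $h$-th negative entry of $\alpha$, in index order, is at least $-h$, while the negative entries of $\beta^{(p,q)}$ are as small and as numerous as the constraints allow; since the totals agree, both inequalities follow in a few lines, with a case split according to whether $q<a+p+2$ or $q\geq a+p+2$.

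The gap is your final step: prefix-sum dominance is exactly where the content of the lemma lives, and in your proposal it is asserted rather than proved. It is closable along the lines you sketch, but the details need care. At the first index $j_0$ where a non-increasing admissible $\alpha$ differs from $\beta$, the equal-sum hypothesis forces $\beta_{j_0}=\beta_{j_0-1}$ (a plateau of $\beta$) and $\alpha_{j_0}=\alpha_{j_0-1}-1$ (a drop of $\alpha$); the opposite configuration would give $\alpha_i\geq\beta_i$ for all $i\geq j_0$ with strict inequality at $j_0$, since $\beta$ has at most one plateau after any position where it drops. Next, writing $\sum_i\alpha_i$ as a constant plus $\sum_j (d-j)$ over the plateau positions $j$ of $\alpha$, one checks that $\alpha$ must have at least \emph{two} plateaus after $j_0$; this produces a raise position (just before the first such plateau) and a lower position (the last plateau) that are non-adjacent, which matters because raising $i$ and lowering $i+1$ on a plateau creates a forbidden drop of $2$. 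Also, your proposed progress measure $\sum_i|\alpha_i-\beta_i|$ need not decrease under a legal shift (the lowered entry may already satisfy $\alpha_j\leq\beta_j$); use instead $\sum_m A_m$, which every shift strictly increases and which is bounded, so the process terminates, and it can only terminate at $\beta$ because every admissible sequence other than $\beta$ admits a shift. With these insertions your argument is complete; it is longer than the paper's half-page computation, but it is more conceptual, proves the inequality for $\sum_i(\alpha_i-c)^+$ for every $c\geq 0$ simultaneously, and explains in what sense $\beta^{(p,q)}$ is extremal.
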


\begin{proof}
Consider sequences $\alpha$ and $\beta^{(p,q)}$ that satisfy the conditions in the statement. By construction, the sequence $\beta^{(p,q)}=(\beta_i)_{i=1,\ldots,d-1}$ is taken in such a way that it minimizes the number of places $i$ with positive values for $\beta_i$, fixing the value for $\sum_{i=1}^{d-1} \beta_i^+$. 

Suppose that $q < a+p+2$, hence $\beta_q>0$. Let $i_1,\ldots,i_c$ be the indexes $i$, ordered from small to big, for which $\alpha_i$ has negative values, and $j_1,\ldots,j_{d-a-p-3}$ the same for $\beta_i$ (so $j_h=h+a+p+2$). Then $d-a-p-3 \geq c$ since the $\beta_i$ are negative in as much places as possible. Moreover, we have that $\alpha_{i_h} \geq \beta_{j_h}$ for each $h\in\{1,\ldots,c\}$. Indeed, if $\alpha_{i_h} < \beta_{j_h} = -h$, then $\alpha_{i_1} < -1$ by repeatedly using the rule $\alpha_{i+1} \geq \alpha_i -1$, hence $\alpha_{i_1-1}<0$, a contradiction.  
Therefore, the sum of the negative $\alpha_i$ is at least the sum of the negative $\beta_i$. Since $\sum_{i=1}^{d-1} \beta_i = \sum_{i=1}^{d-1}\alpha_i$, we can conclude that 
\[\sum_{i=1}^{d-1}\,\beta_i^+\geq \sum_{i=1}^{d-1}\,\alpha_i^+.
\]
 Moreover, 
\[
 \sum_{i=1}^{d-1} (\beta_i-1)^+ = \sum_{i=1}^{d-1} \beta_i^+ - (a+p+1) \geq \sum_{i=1}^{d-1} \alpha_i^+ - (a+p+1) \geq  \sum_{i=1}^{d-1} (\alpha_i-1)^+.
\]

Now suppose that $q \geq a+p+2$, so $\beta_q\leq 0$ (if $q\neq d$). The number of $\beta_i \geq 0$ is $a+p+1$. If $\sum_{i=1}^{d-1} \alpha_i^+ > \sum_{i=1}^{d-1} \beta_i^+$, then the number of places $i$ where $\alpha _i \geq 0$ is more than $a+p$. From this, it follows that  
the sum of the negative $\alpha_i$ is at least the sum of the negative $\beta_i$. We obtain a contradiction with $\sum_{i=1}^{d-1}\alpha_i = \sum_{i=1}^{d-1} \beta_i$. Analogously, we can see that $\sum_{i=1}^{d-1}(\alpha_i-1)^+ \leq \sum_{i=1}^{d-1}(\beta_i -1)^+$.
\end{proof}

We conclude this section by proving Claim \ref{statement2}.

\begin{proof}
Because of Lemma \ref{lemma}, the sequence $\beta^{(p,q)}=(\beta_i)_{i =1, \ldots, d-1}$ defined by (\ref{sequence}) maximizes $\sum_{i=1}^{d-1} \beta_i^+$ and $b+1+\sum_{i=1}^{d-1} (\beta_i -1)^+$. Hence it suffices to check the claim for these kind of sequences. 
We distinguish three cases: 
\begin{enumerate} 
\item $\beta_{d-1} >0$; \label{case1}
\item $\beta_{d-1} \leq 0$ and $q < a+p+2$; \label{case2} 
\item $\beta_{d-1} \leq 0$ and $q \geq a+p+2$. \label{case3}
\end{enumerate}
 
\vspace{\baselineskip}
{\bf Case (\ref{case1})}. Remark that $\beta_i^+ = \beta_i$ for every $i$. We compute $t_2$:   
\begin{align*}
t_2&= \sum_{i=1}^{d-1} \Big(\beta_i -1\Big)^+ + b+1 = \sum_{i=1}^{d-1} \beta_i^+ - (d-1) + b+1 \\
& = \sum_{i=1}^{d-1} \beta_i - (d-1) + b+1 
= (k-1)(d-1) - \frac{(d-2)(d-3)}{2} + 1
\end{align*}
It follows that $t_2 \leq 	\frac{k(k+1)}{2}$ if and only if 
\[
\frac{(d-k-1)(d-k-2)}{2} \geq 0. 
\]
This condition is satisfied since $k \leq d-3$.  

\vspace{\baselineskip}

{\bf Case (\ref{case2})}. Remark that $\beta_q \geq 1$. We compute $t_2$ also in this case: 
\begin{align*}
t_2&= \sum_{i=1}^{d-1} \Big(\beta_i -1\Big)^+ + b+1 = \sum_{i=1}^{d-1} \beta_i^+ - (a+p+1) + b+1 \\
&= \sum_{i=1}^{d-1} \beta_i + \frac{(d-p-a-3)(d-p-a-2)}{2} - (a+p) + b \\
&= \Big[ (k-1)(d-1) - \frac{(d-2)(d-3)}{2} +1\Big] + \frac{(d-p-a-2)(d-p-a-1)}{2}
\end{align*}
It follows that $t_2 \leq 	\frac{k(k+1)}{2}$ if and only if 
\[
\frac{(d-p-a-1)(d-p-a-2)}{2} \leq \frac{(d-k-1)(d-k-2)}{2}. 
\]
This is satisfied since $p \geq k-a$.  

\vspace{\baselineskip} 

{\bf Case (\ref{case3})}. First we consider the special situation $p = k-a$. We compute the term $t_1$: 
\begin{align*}
t_1&= \sum_{i=1}^{d-1} \beta_i^+ = p(a+1) + \frac{a(a+1)}{2} = \frac{(p+a)(p+a+1)}{2} - \frac{p(p-1)}{2} \\
&= \frac{k(k+1)}{2} - \frac{p(p-1)}{2} \leq \frac{k(k+1)}{2}.
\end{align*}

If $p > k-a$, we compute $t_2$:  
\begin{align*}
t_2&= \sum_{i=1}^{d-1} \Big(\beta_i -1\Big)^+ + b+1 = \sum_{i=1}^{d-1} \beta_i^+ - (a+p) + b+1 \\
&= \sum_{i=1}^{d-1} \beta_i + \frac{(d-p-a-1)(d-p-a-2)}{2} - (d-q) - (a+p) + b+1 \\
&= \Big[(k-1)(d-1)- \frac{(d-2)(d-3)}{2} +1\Big] + \frac{(d-p-a-1)(d-p-a-2)}{2} \\
& \ \ \ + q -p-a-1.
\end{align*}
It follows that $t_2 \leq 	\frac{k(k+1)}{2}$ if and only if 
\[
\frac{(d-p-a-1)(d-p-a-2)}{2} + q -p-a-1\geq \frac{(d-k-1)(d-k-2)}{2}. 
\]
This condition is satisfied since $p > k-a$.  
\end{proof}
\end{section}

\begin{section}{Metric graphs} \label{mg}
If $G$ is a graph, we define the {\sl metric graph} $\Gamma$ {\sl corresponding} to $G$ as the metric graph with $V(\Gamma) = V(G)$, $E(\Gamma) = E(G)$ and edge lengths $l(e)=1$, see \cite{HKN}. In this section we explain how to extend the previous results to the complete metric graphs corresponding to the graphs $K_d$, which we will also denote by $K_d$. We start by briefly recalling the main definitions regarding linear systems on metric graphs. We refer to \cite{MZ, GK, HKN} for further reading. 

\begin{defi} Let $\Gamma$ be a metric graph. A {\sl divisor} on $\Gamma$ is an element of the free abelian group $\textrm{Div}(\Gamma)$ on the points of the graph, so
\[
D = \sum_{p \in \Gamma} a_p\,(p) \ \ \textrm{with} \ a_p \in \mathbb{Z}. 
\]
The {\sl degree} of $D$, denoted by $\textrm{deg}(D)$, is the sum of its coefficients. 
As before, if $a_p \geq 0$ for every $p \in \Gamma$, the divisor is said to be {\sl effective}. The {\sl support of a divisor $D$} is the set of points $p$ of $\Gamma$ such that $a_p \not =0$ and it is indicated with $\textrm{supp}(D)$.

\vspace{\baselineskip}

A {\sl rational function} $f:\Gamma \rightarrow \mathbb{R}$ on $\Gamma$ is continuous, piecewise linear with integer slopes and only finitely many pieces. The {\sl principal divisor} $\textrm{div}(f)$ associated to $f$ is the divisor whose coefficient at $p$ is given by the sum of the incoming slopes of $f$ at $p$. Only at a finite number of points, the coefficients are not zero.

Two divisors $D_1$, $D_2 \in \textrm{Div}(\Gamma)$ are {\sl linearly equivalent}, $D_1 \sim D_2$, if there exists a rational function $f$ such that 
\[
D_1  - D_2 = \textrm{div}(f). 
\]

\vspace{\baselineskip}

The {\sl linear system} of $D$, indicated with $|D|$, is the set of the effective divisors linearly equivalent to $D$, 
\[
|D| = \Big\{ E \in \textrm{Div}(\Gamma)\Big| \ E \geq 0, E \sim D\Big\}.
\]
The {\sl rank $\textrm{rk}_{\Gamma}(D)$ of a divisor} is defined as $-1$ if $D$ is not equivalent to any effective divisor, otherwise
\[
\textrm{rk}_{\Gamma}(D)= \max \Big\{ r \in \mathbb{Z}_{\geq 0}\Big| \ |D-E| \not = \emptyset \ \ \  \forall \ E \in \textrm{Div}(\Gamma), \ E \geq 0, \ \textrm{deg}(E)=r\Big\}. 
\]
\end{defi}

\vspace{\baselineskip}

Again, reduced divisor on metric graphs will play an important role. We recall the definition from \cite{HKN} and \cite{Luo}. 

\begin{defi}
Let $\Gamma$ be a metric graph and $X$ be a closed connected subset of $\Gamma$. Given $p \in \partial X$, the {\sl outgoing degree $\textrm{outdeg}_X(p)$ of $X$ at $p$} is defined as the maximum number of internally disjoint segments in $\Gamma \setminus X$ with an open end in $p$. Let $D$ be a divisor on $\Gamma$. A boundary point $p \in \partial X$ is {\sl saturated with respect to $X$ and $D$} if $D(p) \geq \textrm{outdeg}_X(p)$, and {\sl non-saturated} otherwise. A divisor $D$ is {\sl $p$-reduced} if it is effective in $\Gamma \setminus \{p\}$ and each closed connected subset $X \subseteq \Gamma \setminus \{ p \}$ contains a non-saturated boundary point.
\end{defi}

\begin{thm}[Proposition 7 in \cite{MZ}]
Let $D$ be a divisor on a metric graph $\Gamma$. For every point $p \in \Gamma$ there exists a unique $p$-reduced divisor linearly equivalent to $D$. 
\end{thm}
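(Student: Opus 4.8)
The statement splits into uniqueness and existence, and I would dispatch uniqueness first, by a maximum-principle argument. Suppose $D_1$ and $D_2$ are $p$-reduced with $D_1\sim D_2$, and write $D_1-D_2=\mathrm{div}(f)$. I claim $f$ is constant. If not, put $M=\max_\Gamma f$ and let $X$ be a connected component of the closed set $\{q: f(q)=M\}$. Were $p\notin X$, then $X$ would be a closed connected subset of $\Gamma\setminus\{p\}$, so $p$-reducedness of $D_1$ would furnish a non-saturated boundary point of $X$. But at every $q\in\partial X$ the directions pointing into $X$ carry slope $0$, while each of the $\mathrm{outdeg}_X(q)$ segments of $\Gamma\setminus X$ leaving $q$ carries strictly positive incoming slope (since $f<M$ immediately outside $X$); hence the coefficient of $\mathrm{div}(f)$ at $q$ is at least $\mathrm{outdeg}_X(q)$, and as $D_2(q)\ge 0$ we get $D_1(q)\ge\mathrm{outdeg}_X(q)$. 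Thus every boundary point of $X$ is saturated, a contradiction. Therefore $f(p)=M=\max f$; running the identical argument for $-f$ (whose principal divisor is $D_2-D_1$, with $D_2$ reduced) gives $f(p)=\min f$, so $f$ is constant and $D_1=D_2$.

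For existence I would first replace $D$ by a linearly equivalent divisor effective on $\Gamma\setminus\{p\}$, which is routine: transport each chip toward $p$ along a geodesic using principal divisors of piecewise-linear ramp functions, cancelling negative mass against chips accumulated at $p$. Starting from such a representative $E$, I would then construct the reduced divisor by a continuous version of Dhar's burning algorithm. If $E$ is not $p$-reduced, there is a closed connected $X\subseteq\Gamma\setminus\{p\}$ every boundary point of which is saturated; I fire $X$ by adding $\mathrm{div}(\psi_t)$, where $\psi_t$ is constant on $X$ and increases with unit slope along the segments of $\Gamma\setminus X$ leaving $X$, carried out for the largest $t$ keeping $E+\mathrm{div}(\psi_t)$ effective off $p$. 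Because $\partial X$ is saturated, some positive amount of firing is admissible, and it transports chips off $\partial X$ into $\Gamma\setminus X$, i.e.\ toward the component of $p$.

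The genuine obstacle is this existence half: showing that the reduction process halts. In the metric setting the firing amounts $t$ are real numbers, so one cannot simply invoke finiteness of a state space as for finite graphs. I would control termination by exhibiting a quantity that strictly decreases and takes only controlled values — for instance a distance-weighted total mass — together with the observation that the combinatorial type of the unburnt region changes only finitely often (at the discrete moments when a chip reaches a branch point or a boundary point becomes depleted), so that only finitely many firing phases occur. As a fallback I would approximate $\Gamma$ by a refining sequence of finite models $G_n$ containing $p$ and $\mathrm{supp}(D)$, invoke the existence of $p$-reduced divisors on finite graphs recorded above, and pass to the limit; there the difficulty migrates to verifying that the limit divisor stays linearly equivalent to $D$ and is $p$-reduced, which requires a compactness argument for effective divisors of bounded degree and the closedness of linear equivalence under the limit, and is most delicate when the edge lengths or the support of $D$ are irrational.
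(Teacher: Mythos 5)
This theorem is one the paper does not prove at all: it is quoted with attribution as Proposition 7 of \cite{MZ}, so there is no in-paper proof to compare yours against, and your argument has to be judged on its own merits. Your uniqueness half is correct and complete; it is the standard maximum-principle argument. With the paper's convention that $\mathrm{div}(f)$ records sums of incoming slopes, a connected component $X$ of the maximum set $\{f=M\}$ avoiding $p$ has, at every boundary point $q$, incoming slope $0$ along directions entering $X$ and incoming slope at least $1$ along each of the $\mathrm{outdeg}_X(q)$ directions leaving $X$ (the slopes are integers and $f<M$ immediately outside $X$), whence $D_1(q)\geq D_2(q)+\mathrm{outdeg}_X(q)\geq \mathrm{outdeg}_X(q)$. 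So every boundary point of $X$ is saturated, contradicting $p$-reducedness of $D_1$; applying the same reasoning to $-f$ forces $f$ to be constant.

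The existence half, however, has two genuine gaps. First, the preliminary step is not ``routine'' in the way you describe: one cannot ``transport each chip toward $p$ along a geodesic'' by ramp functions, because a rational function must be continuous on all of $\Gamma$, and a ramp supported along a path is globally inconsistent when that path lies on a cycle. Concretely, on a metric circle of circumference $\ell$ one has $(q)\not\sim(p)$ whenever $q\neq p$: a function with $\mathrm{div}(f)=(q)-(p)$ would be forced by continuity to have the non-integer slope $b/\ell$ on one arc. So individual chips cannot be moved at all; making $D$ effective off $p$ requires a different mechanism, e.g.\ set-firing moves that sweep the whole graph toward $p$, or Riemann--Roch applied to $D+N(p)$ for large $N$. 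Second, and more seriously, the step you yourself flag --- termination of the continuous Dhar/firing process --- is exactly the hard content of the theorem, and neither of your two proposed strategies is carried out. The assertion that ``the combinatorial type of the unburnt region changes only finitely often'' is precisely what must be proved (one has to exclude infinitely many firing phases of shrinking duration), and the finite-model fallback rests on the compactness of the set of effective divisors of bounded degree and the closedness of linear equivalence, which you defer rather than establish. The published proofs close exactly this gap, either by extremal/compactness arguments (choosing a representative effective off $p$ that minimizes a suitable functional over a compact set and showing a minimizer is reduced, in the spirit of \cite{MZ} and \cite{Luo}) or by a burning process whose termination is carefully proved. As it stands, your write-up proves uniqueness but only outlines existence.
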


Let $K_d$ be the complete metric graphs on $d$ vertices. We present a similar characterization of reduced divisors on $K_d$ as in Lemma \ref{reduced}. Given a vertex $v$ of $K_d$ and an ordering $v_1, \dots, v_{d-1}$ of the vertices in $V(K_d) \setminus \{v\}$, for every $1\leq i \leq d-1$ we define 
\[
A_i = (v,v_i] \sqcup \bigsqcup_{j< i} \ (v_j,v_i].
\]
Remark that $K_d \setminus \{v\} = \sqcup_{i=1}^{d-1} A_i$. See Figure \ref{figure} for an example.
\begin{figure}[h]
\centering
\includegraphics[width=4cm,height=4cm,keepaspectratio]{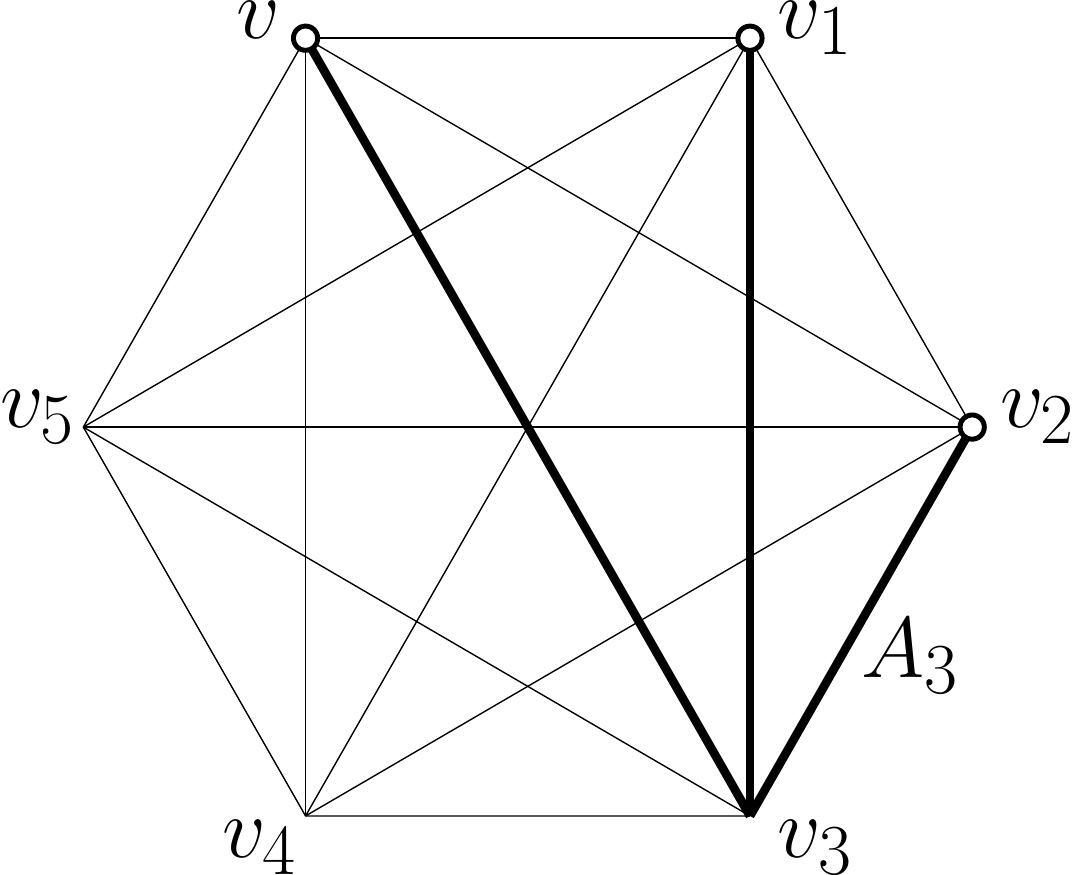} 
\caption{The thickened edges represent the set $A_3$. The vertices $v$, $v_1$ and $v_2$ are not contained in $A_3$.} 
\label{figure}

\end{figure}

\begin{lem} \label{reduced1} Let $D$ be a divisor on the metric graph $K_d$. It is reduced with respect to a vertex $v$ if and only if the following conditions are satisfied: 
\begin{enumerate}
\item $D$ is effective in $K_d \setminus \{v\}$; \label{c1}
\item for every edge $e \in E(K_d)$, \label{c2}
\[
\sum_{p \in e^{\circ}} D(p) \leq 1;
\]
\item \label{c3}
there exists an ordering $v_1, \dots, v_{d-1}$ of the vertices in $V(K_d) \setminus \{v\}$ such that  
\[
\mathfrak{D}(v_i) := \textrm{deg}(D|_{A_i}) \leq i-1.
\]
\end{enumerate}
\end{lem}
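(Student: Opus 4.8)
The plan is to prove both implications of the equivalence, handling the three conditions separately; the only genuinely new feature compared with Lemma~\ref{reduced} is that a divisor on the metric graph may place mass in the interior of edges, so the work lies in computing the boundary and the outgoing degrees of suitable test sets $X$. Condition (\ref{c1}) is part of the definition of a $v$-reduced divisor, so nothing is needed there. To establish that reducedness forces (\ref{c2}), I would argue by contradiction with very small test sets: if some edge interior $e^{\circ}$ carried total mass at least $2$, then either a single point $p\in e^{\circ}$ has $D(p)\geq 2$, and $X=\{p\}$ satisfies $\textrm{outdeg}_X(p)=2\leq D(p)$; or two distinct points $p,q\in e^{\circ}$ have $D(p),D(q)\geq 1$, and $X=[p,q]$ has exactly the two boundary points $p,q$, each of outdeg $1$ and saturated. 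In either case $X$ is a closed connected subset of $\Gamma\setminus\{v\}$ with no non-saturated boundary point, contradicting reducedness. Note that (\ref{c2}) says each $e^{\circ}$ carries either no mass or a single point of coefficient $1$; this fact will be reused as an input below.

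For the implication (\ref{c1})$\wedge$(\ref{c2})$\wedge$(\ref{c3})$\Rightarrow$ reduced, I would take an arbitrary nonempty closed connected $X\subseteq\Gamma\setminus\{v\}$ and set $i_0=\min\{i:X\cap A_i\neq\emptyset\}$, using $\Gamma\setminus\{v\}=\bigsqcup_i A_i$. If $v_{i_0}\notin X$, then since $X$ is connected and meets no lower star, it cannot cross $v_{i_0}$ nor any of the excluded far endpoints, so it is trapped inside a single open edge-leg of $A_{i_0}$; thus $X$ is a point or a closed subsegment, and condition (\ref{c2}) guarantees a non-saturated endpoint exactly as in the previous paragraph. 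If $v_{i_0}\in X$, I would examine the $i_0$ legs of $A_{i_0}$ pointing to $v,v_1,\dots,v_{i_0-1}$: on each such leg either the direction escapes $X$ at $v_{i_0}$, contributing to $\textrm{outdeg}_X(v_{i_0})$, or $X$ enters the leg and, since the far endpoint lies outside $X$, produces an interior boundary point of outdeg $1$. Assuming for contradiction that every boundary point of $X$ were saturated, each escaping direction forces a unit of $D(v_{i_0})$ while each interior boundary point forces a unit of mass on the corresponding edge interior, so summing over the $i_0$ legs gives $\mathfrak{D}(v_{i_0})=\deg(D|_{A_{i_0}})\geq i_0$, contradicting (\ref{c3}).

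The more delicate implication is reduced $\Rightarrow$ (\ref{c3}), where I would build the ordering greedily, mirroring Lemma~\ref{reduced} but absorbing edge mass into vertex ``loads''. Suppose $v_1,\dots,v_i$ have been chosen with $\mathfrak{D}(v_j)\leq j-1$, and put $W=\{v,v_1,\dots,v_i\}$, $U=V(K_d)\setminus W$. I would take $X$ to be the induced complete metric subgraph on $U$, enlarged along each edge from a vertex $u\in U$ to $W$ by the segment from $u$ up to and including the unique interior mass point of that edge, whenever one exists. This $X$ is closed, connected and disjoint from $v$. Its boundary consists of the added mass points, each of outdeg $1$ and coefficient $1$ and hence saturated by (\ref{c2}), together with the vertices $u\in U$, for which $\textrm{outdeg}_X(u)=(i+1)-m(u)$, where $m(u)$ counts the edges from $u$ to $W$ carrying mass. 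Reducedness then supplies a non-saturated vertex $u^{\ast}\in U$, and non-saturation reads $D(u^{\ast})<(i+1)-m(u^{\ast})$, i.e.\ $\mathfrak{D}(u^{\ast})=D(u^{\ast})+m(u^{\ast})\leq i$; setting $v_{i+1}=u^{\ast}$ continues the induction until $U=\emptyset$.

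The main obstacle I anticipate is the metric bookkeeping rather than any single inequality: one must check carefully that every test set is closed, connected and avoids $v$, and compute its boundary and outgoing degrees correctly (in particular that a point interior to an edge has outdeg $2$, and that each leg of $A_{i_0}$ separates the graph as claimed). The real crux is the design of $X$ in the last step: the segments are stopped exactly at the edge mass points precisely so that this mass registers as part of the load $\mathfrak{D}(u)$ at the vertex $u$, rather than appearing as separate saturated and hence useless boundary points. Condition (\ref{c2}), which guarantees at most one mass point of coefficient $1$ per edge, is exactly what makes this absorption legitimate and well defined.
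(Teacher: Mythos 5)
Your proposal is correct, and it shares the paper's overall skeleton: condition (\ref{c2}) is forced by tiny test sets inside a single edge, necessity of (\ref{c3}) comes from a greedy construction whose test set is the complete subgraph on the not-yet-ordered vertices extended along edges up to their (unique) interior mass points, and sufficiency comes from locating a minimal-index vertex and playing $\mathfrak{D}$ off against outgoing degrees. The genuine difference is in how sufficiency is organized. The paper first restricts to subsets $X$ whose boundary points all lie in $\mathrm{supp}(D)$ and which contain the full edge between any two of their vertices, picks $j=\min\{i : v_i \in X\}$, and then disposes of a general $X$ by enlarging it to such an $X'$ and transferring non-saturation back through $\textrm{outdeg}_{X'}(w)\leq\textrm{outdeg}_{X}(w)$; that transfer needs the non-saturated vertex found for $X'$ to lie in $X$, a point the paper treats briskly and which also forces the separate case $X=\{p\}$. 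You instead set $i_0=\min\{i : X\cap A_i\neq\emptyset\}$ and split on whether $v_{i_0}\in X$: if not, connectedness traps $X$ in a single open edge (its possible exit points $v_{i_0}$, $v$, $v_j$ with $j<i_0$ all lie outside $X$) and (\ref{c2}) produces a non-saturated endpoint; if so, each of the $i_0$ legs of $A_{i_0}$ either escapes at $v_{i_0}$, feeding $\textrm{outdeg}_X(v_{i_0})$, or ends at an interior boundary point of outgoing degree $1$, so assuming every boundary point saturated gives $\mathfrak{D}(v_{i_0})\geq i_0$, contradicting (\ref{c3}). This treats every closed connected $X$ in one pass, with no enlargement step, at the price of the connectivity bookkeeping (each leg meets $X$ in a closed segment attached to $v_{i_0}$), which you correctly flag as the point needing care. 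On the necessity side your argument is the paper's run forwards -- reducedness hands you a non-saturated boundary vertex $u^{\ast}$, which becomes $v_{i+1}$ -- whereas the paper argues by contradiction, exhibiting an everywhere-saturated set if no admissible next vertex existed; these are the same construction, and in both versions condition (\ref{c2}), established first, is what guarantees each edge contributes a single boundary mass point of coefficient exactly $1$, so that the absorbed edge mass is counted correctly in $\mathfrak{D}(u^{\ast})=D(u^{\ast})+m(u^{\ast})$.
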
 
\begin{proof} We start by proving the `if' part, so assume that $K_d$ satisfies the three conditions. We need to show that every closed connected subset $X$ of $K_d \setminus \{v\}$ contains a non-saturated boundary point. If $X$ contains a boundary point $p$ that is not in the support of $D$, than $p$ is non-saturated. So we focus on the case that the boundary points of $X$ are all in the support of $D$. In case $X =\{p\}$, with $p$ a point in the interior of an edge, then $p$ is non-saturated because of condition (\ref{c2}). 

Suppose that $X$ satisfies the hypothesis that if two vertices are contained in $X$, then also the edge connecting them is contained in $X$. We define 
\[
A : = V(K_d) \cap X \ \ \textrm{and} \ \ A^{\complement} = (K_d \setminus X) \cap V(K_d) = V(K_d) \setminus A. 
\]
A vertex $w$ of $X$ is not-saturated if and only if 
\[
D(w) + \Big|\Big\{q \in \partial X \,:\, q \in (w, w') \ \textrm{with} \ w' \in A^{\complement}\Big\}  \Big| < d - |A|. 
\]
Let $j = \min\{i \ | \ v_i \in A\}$, so $j\leq d-|A|$. By condition (\ref{c3}), we know that $\mathfrak{D}(v_j) \leq j-1$. We show that $v_j$ is non-saturated:
\begin{align*}
& D(v_j) + \Big|\Big\{q \in \partial X \,:\, q \in (v_j, w') \ \textrm{with} \ w' \in A^{\complement} \Big\}  \Big| \\
& = \mathfrak{D}(v_j) + \Big|\Big\{q \in \partial X \,:\, q \in (v_j, v_i) \ \textrm{with} \ i > j \ \textrm{and} \ v_i \in A^{\complement} \Big\}  \Big|  \\
& \leq \mathfrak{D}(v_j) + d - |A| - j  \\
& \leq j-1 + d - |A| - j = d-|A|-1.
\end{align*}
If the hypothesis on the closed subset $X$ is not satisfied, consider a closed connected subset $X'\subset K_d\setminus\{v\}$ containing $X$ and that satisfies the hypothesis. For each vertex $v \in X$ it holds 
that $\textrm{outdeg}_{X'}(v) \leq \textrm{outdeg}_X(v)$. Therefore if $v$ is non-saturated for $X'$, it is also non-saturated for $X$. 
\vspace{\baselineskip}

For the `only if' part, suppose that $D$ is $v$-reduced. It is clear that the conditions (\ref{c1}) and (\ref{c2}) are satisfied. We now show condition (\ref{c3}). As a first step, we claim that there exists a vertex $v_1\neq v$ such that $$(v,v_1]\cap \text{Supp}(D)=\emptyset.$$ Indeed, if not, there is a point $p\in (v,w]\cap \text{Supp}(D)$ for each vertex $w\neq v$, and we can consider a connected subset $X\subset K_d\setminus\{v\}$ for which the boundary consists of all these points $p$. Note that $X$ contains the complete subgraph on the vertices in $V(K_d) \setminus \{v\}$. Since each boundary point $p\in\partial X$ is saturated, this is in contradiction with the fact that $D$ is $v$-reduced. 

In a similar way, we can find a vertex $v_2\neq v,v_1$ such that the sum of the coefficients $D(p)$ with $p\in (v,v_2]\cup(v_1,v_2]$ is at most $1$. In order to show this, we work with a connected component $X$ for which the boundary consists of points $p\in \text{Supp}(D)$ lying on either $(v,w]$ or $(v_1,w]$ for some vertex $w\neq v,v_1$. This component contains the complete subgraph on the vertices in $V(K_d) \setminus \{v,v_1\}$.    

Iterating this argument, we find back condition (\ref{c3}).
   
\end{proof}
\begin{exam} Consider the graph and the divisor $D$ drawn in Figure \ref{figes}. The divisor is $v$-reduced, since we have 
\[
\mathfrak{D}(v_1) =0, \ \ \mathfrak{D}(v_2) =0, \ \ \mathfrak{D}(v_3) =2, \ \ \mathfrak{D}(v_4) =2, \ \ \text{and} \ \ \mathfrak{D}(v_5) =4.
\]
\begin{figure}[h]
\centering
\includegraphics[width=4cm,height=4cm,keepaspectratio]{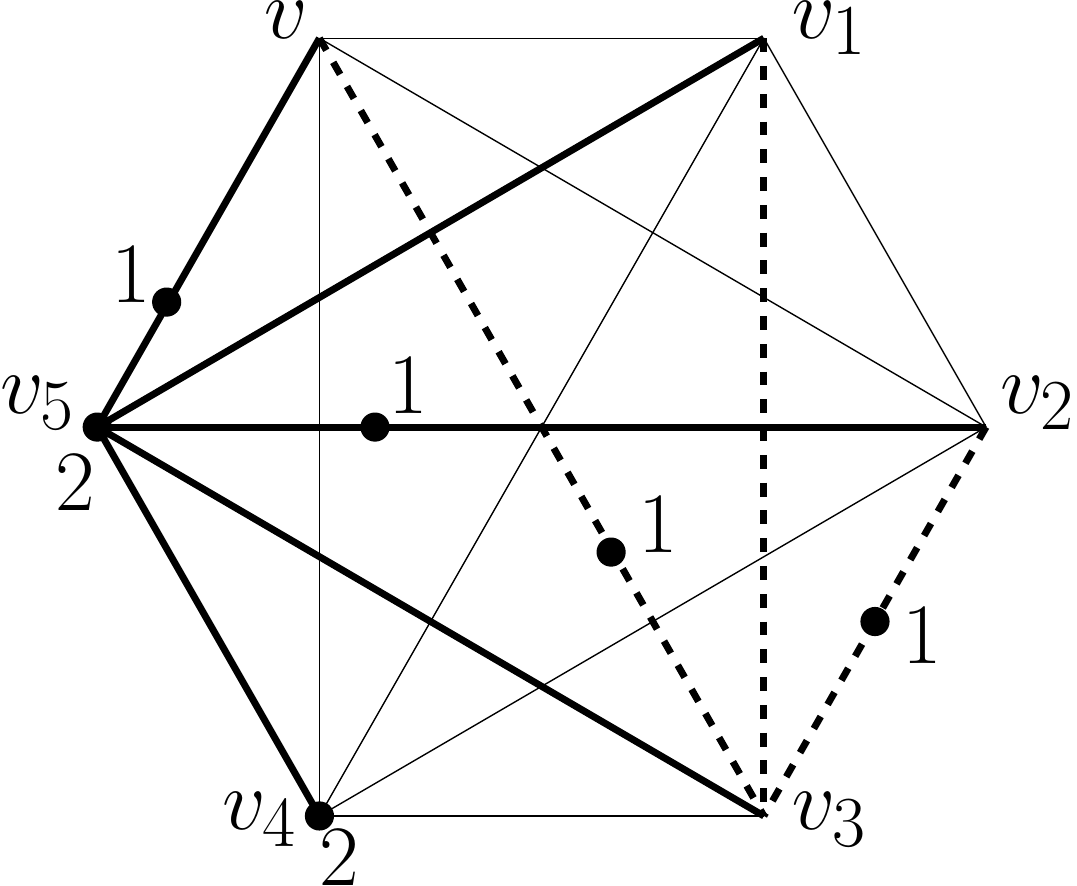} 
\caption{The thickened edges represent the set $A_5$, and the thickened and dashed ones the set $A_5$.} 
\label{figes}
\end{figure}
\end{exam}

The definitions of linear systems $g^r_s$ and gonality sequences translate to the setting of metric graphs. We prove that the gonality sequence of complete metric graphs $K_d$ is the same as the one of the ordinary complete graphs. 

\begin{thm} \label{gsmetric} The gonality sequence of the metric graph $K_d$ is 
\[
\gamma_r = \begin{cases}  kd - h &  \textrm{if} \ r <g \\ 
g+r &  \textrm{if} \ r \geq g. 
\end{cases} \\
\]
where $k$ and $h$ are the uniquely determined integers with $1\leq k\leq d-3$ and $0 \leq h \leq k $ such that 
\[
r = \frac{k(k+3)}{2} - h.
\]
\end{thm}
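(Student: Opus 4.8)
The plan is to run exactly the two-part strategy of Theorem \ref{gs}, but with the vertex coefficients $D(v_i)$ systematically replaced by the aggregated quantities $\mathfrak{D}(v_i)=\deg(D|_{A_i})$ furnished by Lemma \ref{reduced1}. For the upper bound $\gamma_r\le kd-h$ I would reprove nothing: by \cite[Theorem 1.3]{HKN} the rank of a vertex-supported divisor is the same on $K_d$ and on its metric realization, so every $g^r_s$ produced in Section \ref{1} persists on the metric graph, and the inequality $\mathrm{rk}(D-(v))\ge\mathrm{rk}(D)-1$ (valid on metric graphs as well) again lets me descend from $h=0$ to arbitrary $h$. Combined with $\gamma_r=g+r$ for $r\ge g$, which is Riemann--Roch for metric graphs \cite{MZ}, this leaves only the sharpness of the bound to establish.

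For the lower bound I would argue as in Section \ref{2}. Using $\mathrm{rk}(D+(p))\le\mathrm{rk}(D)+1$ it suffices to treat $h=k$, that is, to show every divisor $D$ of degree $k(d-1)-1$ has rank $<\tfrac{k(k+1)}{2}$; after replacing $D$ by its $v_d$-reduced representative and dismissing the case $D(v_d)<\tfrac{k(k+1)}{2}$ as before, I write $D(v_d)=a(d-1)+b$ and set $\alpha_i:=\mathfrak{D}(v_i)+a-(i-2)$. The decisive observation is that the hypotheses of Claim \ref{statement2} translate verbatim: the rule $\alpha_i\le a+1$ is condition (\ref{c3}) of Lemma \ref{reduced1}, the rule $\alpha_i\ge a-i+2$ is effectivity, and $\sum_i\mathfrak{D}(v_i)=\deg(D)-D(v_d)$ produces the same value of $\sum_i\alpha_i$ required in Claim \ref{statement2}. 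Hence Claim \ref{statement2} applies without modification and gives $\min\{t_1,t_2\}\le\tfrac{k(k+1)}{2}$. According to which of $t_1,t_2$ is small, I add $a+1$ (respectively $a$) copies of the principal divisor $-(d-1)(v_d)+\sum_{i<d}(v_i)$ to $D$ and subtract the vertex-supported effective divisor $E$ with $E(v_i)=\alpha_i^+$ (respectively $E(v_i)=(\alpha_i-1)^+$ together with $E(v_d)=b+1$); a short check via Lemma \ref{reduced1} shows the outcome is $v_d$-reduced with a negative coefficient at $v_d$, so $|D-E|=\emptyset$ and $\mathrm{rk}(D)<\tfrac{k(k+1)}{2}$.

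The single genuinely new ingredient---and the step I expect to be the main obstacle---is that $\mathfrak{D}(v_i)$ depends on the chosen ordering, whereas the rule $\alpha_{i+1}\ge\alpha_i-1$ requires $\mathfrak{D}(v_{i+1})\ge\mathfrak{D}(v_i)$, i.e.\ that $\mathfrak{D}$ be non-decreasing. I would secure this by refining the greedy construction in the proof of Lemma \ref{reduced1}: for a prefix $S=\{v_d,v_1,\dots,v_{i-1}\}$ and an unplaced vertex $w$, set $m_S(w)=D(w)+\sum_{u\in S}\deg\big(D|_{(u,w)}\big)$, where $(u,w)$ is the interior of the edge $uw$, so that $\mathfrak{D}(v_i)=m_{S_i}(v_i)$. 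Since $m_S(w)$ is non-decreasing in $S$ (each new term is the nonnegative interior mass of an edge), choosing at every step the unplaced vertex that minimizes $m_{S_i}$ forces both $\mathfrak{D}(v_i)\le i-1$---the minimizer is at least as good as the non-saturated vertex guaranteed by reducedness, where condition (\ref{c2}) of Lemma \ref{reduced1} ensures each relevant edge interior contributes at most $1$---and $\mathfrak{D}(v_{i+1})\ge\mathfrak{D}(v_i)$, because enlarging the prefix only raises $m$. The rest is pure bookkeeping: adding the vertex-supported principal divisor merely shifts each $\mathfrak{D}(v_i)$ by a constant and subtracting the vertex-supported $E$ leaves every edge interior untouched, so conditions (\ref{c1}) and (\ref{c2}) are preserved automatically and condition (\ref{c3}) survives with the same ordering and the same sets $A_i$.
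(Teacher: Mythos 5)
Your proposal is correct and follows essentially the same route as the paper: existence via Theorem \ref{gs} combined with Theorem \ref{ranghi}, and sharpness by reducing to $h=k$, feeding the sequence $\mathfrak{a}_i=\mathfrak{D}(v_i)+a-(i-2)$ into Claim \ref{statement2}, and exhibiting a $v_d$-reduced divisor $D'-E$ or $D''-E$ with negative coefficient at $v_d$. The one point where you are more careful than the paper is the monotonicity $\mathfrak{D}(v_i)\leq\mathfrak{D}(v_{i+1})$: the paper asserts that a simultaneously monotone and admissible ordering exists by citing only Lemma \ref{reduced1}, whereas your greedy-minimizer refinement actually proves this, and some such argument is genuinely needed since the quantities $\mathfrak{D}(v_i)$ depend on the chosen ordering (unlike the coefficients $D(v_i)$ in the finite-graph case, where sorting is harmless).
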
 

In the proof of the theorem we will use the following result from \cite{HKN}, which relates the rank on graphs with the rank on metric graphs: 
\begin{thm}\label{ranghi}
Let $D$ be a divisor on a loopless graph $G$ and let $\Gamma$ be the metric graph corresponding to $G$. Then, 
\[
\textrm{rk}_G(D) = \textrm{rk}_{\Gamma}(D).
\]
\end{thm}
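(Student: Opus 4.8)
The plan is to route everything through reduced divisors, exploiting that $V(\Gamma)=V(G)$; throughout, $D$ denotes the given divisor, supported on $V(G)$. The preliminary step I would establish is a \emph{reduced-divisor correspondence}. Fix a vertex $q$ and let $D'\sim_G D$ be the combinatorial $q$-reduced representative of $D$; being a divisor on $G$, it is supported on $V(G)=V(\Gamma)$. I claim $D'$ is also $q$-reduced when viewed on $\Gamma$. Indeed, $D'$ has no chips in edge interiors, so any closed connected $X\subseteq\Gamma\setminus\{q\}$ possessing an interior boundary point already has a non-saturated one there; one may therefore assume $X$ is a union of vertices and whole edges, and for such $X$ the metric non-saturation condition reduces verbatim to the combinatorial one satisfied by $D'$. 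By uniqueness of the $q$-reduced divisor on $\Gamma$, the combinatorial and metric $q$-reductions of $D$ coincide. (For $G=K_d$ this is precisely the comparison of Lemma~\ref{reduced} with Lemma~\ref{reduced1}.) Consequently, for every divisor $F$ supported on $V(G)$ one has $|F|_G\neq\emptyset$ if and only if $|F|_\Gamma\neq\emptyset$, both being detected by the sign of the coefficient at $q$ of the common reduced representative.

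From this the inequality $\mathrm{rk}_G(D)\ge\mathrm{rk}_\Gamma(D)$ is immediate. Writing $r=\mathrm{rk}_\Gamma(D)$, let $E\ge 0$ be any divisor supported on $V(G)$ with $\deg E\le r$. Regarding $E$ as an effective divisor on $\Gamma$, the definition of $\mathrm{rk}_\Gamma$ gives $|D-E|_\Gamma\neq\emptyset$, whence $|D-E|_G\neq\emptyset$ by the correspondence applied to $D-E$. Since $E$ ranges over all combinatorial test divisors of degree $\le r$, this shows $\mathrm{rk}_G(D)\ge r$.

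The reverse inequality $\mathrm{rk}_G(D)\le\mathrm{rk}_\Gamma(D)$ is the hard direction, since the definition of $\mathrm{rk}_\Gamma$ forces control over test divisors $E$ whose support meets the interiors of edges, where the correspondence does not apply. Put $r=\mathrm{rk}_G(D)$; I would prove $|D-E|_\Gamma\neq\emptyset$ for every effective $E$ of degree $r$ on $\Gamma$ in two moves. First, a semicontinuity reduction: the set of effective degree-$r$ divisors $E$ with $|D-E|_\Gamma\neq\emptyset$ is closed, being the image of the compact set $\{(H,E):H\in|D|_\Gamma,\ 0\le E\le H\}$ under the projection to $E$ (here one uses that $|D|_\Gamma$ is a compact polyhedral complex, see \cite{GK}). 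As divisors with rational support are dense, it suffices to treat $E$ whose support consists of rational points. Second, for such $E$ choose $N$ so large that every point of its support is a vertex of the $N$-fold subdivision $G_N$; the metric graph of $G_N$ is $\Gamma$ scaled by $N$, and metric rank is unchanged under global rescaling, so proving $|D-E|\neq\emptyset$ on the metric graph of $G_N$ suffices. Now $D$ and $E$ are both supported on $V(G_N)$, and $G_N$ is again loopless, so the correspondence applied to $G_N$ reduces the statement to the purely combinatorial $|D-E|_{G_N}\neq\emptyset$.

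The single remaining input is the heart of the matter: one needs $\deg E=r\le\mathrm{rk}_{G_N}(D)$, i.e.\ that subdivision does not lower the combinatorial rank, $\mathrm{rk}_{G_N}(D)\ge\mathrm{rk}_G(D)$. This invariance is the genuine combinatorial content underlying \cite[Theorem~1.3]{HKN} and, crucially, does \emph{not} follow formally from the reduced-divisor correspondence, which only compares a fixed graph with its own metric graph. Establishing it requires the core chip-firing argument of loc.\ cit.; alternatively, the whole hard direction can be bypassed by invoking Luo's rank-determining sets \cite{Luo} to show directly that test divisors supported on $V(\Gamma)$ already compute $\mathrm{rk}_\Gamma(D)$. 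I expect this subdivision-invariance to be the main obstacle, every other step being routine manipulation of reduced divisors of the type used in Sections~\ref{1} and~\ref{2}.
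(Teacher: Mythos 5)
First, a point of reference: the paper does not prove Theorem \ref{ranghi} at all --- it is quoted from \cite[Theorem 1.3]{HKN} --- so there is no internal proof to compare yours against, and your attempt must be judged on its own merits. Your scaffolding is sound. The reduced-divisor correspondence is correct: a divisor supported on $V(G)$ that is combinatorially $q$-reduced is also $q$-reduced on $\Gamma$ (any closed connected $X$ with a boundary point in an edge interior has a non-saturated boundary point there, and for $X$ a union of vertices and whole edges the metric outdegree at a vertex dominates the combinatorial outdegree of $X\cap V(G)$); combined with the fact --- which you use implicitly and should state --- that $G$-linear equivalence implies $\Gamma$-linear equivalence, uniqueness of reduced divisors identifies the two reductions. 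This yields $|F|_G\neq\emptyset\Leftrightarrow|F|_\Gamma\neq\emptyset$ for vertex-supported $F$, hence $\mathrm{rk}_G(D)\geq\mathrm{rk}_\Gamma(D)$, and your semicontinuity-plus-subdivision reduction of the converse (rational test divisors are dense, the locus of good test divisors is closed by compactness of $|D|_\Gamma$, and rational test divisors live on $V(G_N)$) is also valid.

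However, as you yourself concede, the main route is not a proof: everything funnels into the assertion $\mathrm{rk}_{G_N}(D)\geq\mathrm{rk}_G(D)$, i.e.\ that subdivision does not lower combinatorial rank, and this is precisely the core content of \cite[Theorem 1.3]{HKN} --- the very theorem to be proved. Reducing a statement to its own essential ingredient and then pointing at that ingredient's source is circular, so the primary argument has a genuine gap. The one-sentence alternative you mention does close it, though: Luo's theorem that $V(G)$ is a rank-determining set for $\Gamma$ \cite{Luo}, together with your correspondence, gives $\mathrm{rk}_\Gamma(D)\geq\mathrm{rk}_G(D)$ in two lines (every vertex-supported test divisor $E$ of degree $\mathrm{rk}_G(D)$ satisfies $|D-E|_G\neq\emptyset$, hence $|D-E|_\Gamma\neq\emptyset$, and by rank-determination vertex-supported test divisors suffice to bound $\mathrm{rk}_\Gamma$ from below). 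Since \cite{Luo} is an external result distinct from \cite{HKN} --- and one this paper itself invokes in Section \ref{arbitrary} --- that route is a legitimate and complete proof; it should have been made the primary argument and carried out in full, rather than left as an aside.
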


\begin{proof}[Proof of Theorem \ref{gsmetric}]
Again for every $k\geq 1$ and for every $0 \leq h \leq k$ there are two statements that need to be shown: 
\begin{itemize} 
\item there exists a divisor of degree $kd - h$ and rank $\frac{k(k+3)}{2} - h$; 
\item there does not exist a divisor of degree strictly smaller than $kd- h$ and rank $\frac{k(k+3)}{2} - h$. 
\end{itemize}  

The first statement follows directly from Theorem \ref{gs} and Theorem \ref{ranghi}. We consider the second statement. As before, it is enough to set $h=k$ and to show that the rank of every divisor $D$ of degree $dk-k-1$ is strictly smaller than $\frac{k(k+3)}{2}-k = \frac{k(k+1)}{2}$.

\vspace{\baselineskip}

Let $D$ be $v_d$-reduced. By Lemma \ref{reduced1}, we can label the vertices so that the following inequalities are satisfied:
\[
\mathfrak{D}(v_1) = 0 \leq \mathfrak{D}(v_2) \leq \cdots \leq \mathfrak{D}(v_{d-1})\leq d-2, \ \ \textrm{and} \ \ \mathfrak{D}(v_i) \leq i-1, \ \textrm{for} \ i\leq d-1.
\]

We write 
\[D(v_d) = a(d-1) + b, \ \textrm{with} \ a,b \in \mathbb{Z}_{\geq0}, \ 0 \leq b \leq d-2 \ \textrm{and} \ a\leq k-1. 
\]
Consider the divisors
\begin{align}\label{prob1}
&D' = \Big( b- (d-1) \Big) (v_d) + \sum_{i=1}^{d-1} \Big[D|_{A_i} + (a+1) (v_i)\Big] \sim D, \\
&D'' =  b \, (v_d) + \sum_{i=1}^{d-1} \Big[D|_{A_i} + a \, (v_i)\Big] \sim D. \label{prob2}
\end{align}
We define $\mathfrak{a}_i := \mathfrak{D}(v_i) + a - (i-2)$ and $\mathfrak{a}_i^+ := \max \{0, \mathfrak{a}_i\}$.
The sequence $\mathfrak{a}=(\mathfrak{a}_i)_{i=1,\ldots,d-1}$ will play the role of the sequence $\alpha$ in Section \ref{2}. 
In particular, we have that
\[
\sum_{i=1}^{d-1} \mathfrak{a}_i = k(d-1) - \frac{(d-2)(d-3)}{2} - b. 
\]
and the inequalities in (\ref{condition}) are satisfied.

Because of Claim \ref{statement2}, either
\[
\mathfrak{t}_1 := \sum_{i=1}^{d-1} \mathfrak{a}_i^+ \ \ \textrm{or} \ \ \mathfrak{t}_2 := b+1 + \sum_{i=1}^{d-1} \Big(\mathfrak{a}_i-1\Big)^+ 
\]
is at most $\frac{k(k+1)}{2}$. This allows us to construct an effective divisor $E$ of degree $\frac{k(k+1)}{2}$ such that either $D'-E$ or $D''-E$ is $v_d$-reduced and has a negative coefficient at $v_d$, hence $\text{rk}(D)\leq \frac{k(k+1)}{2}$. 
\end{proof}
\end{section}

\begin{section}{What about arbitrary edge lengths?}\label{arbitrary}

In this section, we will focus on complete metric graphs with arbitrary edge lengths, which we will denote by $K_d(\ell)$ for a vector 
$\ell\in(\mathbb{R}_{>0})^{d \choose 2}$. The arguments presented in the previous sections do not directly extend to the graphs $K_d(\ell)$. Therefore, computing the gonality sequence of $K_d(\ell)$ is still an open problem in general. Below, we briefly explain the obstructions we encountered. 
\vspace{\baselineskip}

In Section \ref{1}, we proved that the divisor $D = \sum_{i=1}^{d}k(v_i)$ on the graph $G=K_d$ has rank 
$\frac{k(k+3)}{2}$ for $1\leq k\leq d-3$. Theorem \ref{ranghi} allowed us to conclude that the divisor $D$ viewed on the corresponding metric graph $\Gamma=K_d$ has the same rank. This result does not hold if the lengths of the edges are arbitrary. Moreover, our computation of the rank on $G=K_d$ relies on \cite[Lemma 3]{CLB}, which uses the symmetries of complete graphs. 

Even though we can not use the arguments from Section \ref{1}, we still expect that the divisor $D = \sum_{i=1}^{d} k(v_i)$ on 
$\Gamma=K_d(\ell)$ has rank $\frac{k(k+3)}{2}$, if $1\leq k\leq d-3$.  
This statement holds for $k=d-3$: in this case, the divisor $D$ coincides with the canonical divisor $K_\Gamma$, which has rank 
$g-1=\frac{d(d-3)}{2}$. It is an easy exercise to check this statement for $k=1$ and (by using Riemann-Roch) $k=d-4$. The following results provide extra motivation.   

\begin{lem} 
The divisor $D = \sum_{i=1}^d k(v_i)$ on $\Gamma=K_d(\ell)$ has rank at most $\frac{k(k+3)}{2}$. 
\end{lem}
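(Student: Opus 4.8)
The plan is to produce a concrete divisor $D'$ linearly equivalent to $D=\sum_{i=1}^d k(v_i)$ together with an effective divisor $E$ of degree $\frac{k(k+3)}{2}+1$ for which $|D'-E|=\emptyset$; this immediately gives $\textrm{rk}(D)\leq\frac{k(k+3)}{2}$, since the rank is the largest $r$ such that $D$ minus every effective divisor of degree $r$ is equivalent to an effective one. The natural candidate for certifying emptiness is a reduced divisor: if $D'-E$ is $v$-reduced for some vertex $v$ and has a negative coefficient at $v$, then it cannot be equivalent to an effective divisor, so $|D'-E|=\emptyset$. Thus the whole argument reduces to exhibiting one well-chosen degree-$\big(\frac{k(k+3)}{2}+1\big)$ effective divisor $E$ supported on the vertices, and verifying via Lemma \ref{reduced1} that a suitable representative of $D-E$ is reduced with respect to one vertex and negative there.

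\emph{Execution.}
First I would order the vertices $v_1,\dots,v_d$ arbitrarily and mimic the computation in Remark \ref{specialization}, which already carried out exactly this kind of calculation for the ordinary graph in the case $h=0$: there one takes
\[
E=\sum_{i=1}^{k+1}\bigl(k-(i-2)\bigr)(v_i),
\]
of degree $\frac{k(k+3)}{2}+1$, and checks that $D-E$ is $v_1$-reduced with a $-1$ at $v_1$. Since $E$ is supported only on vertices and involves no interior points of edges, the verification that $D-E$ satisfies the three conditions of Lemma \ref{reduced1} is \emph{formally identical} to the verification in the ordinary-graph case using Lemma \ref{reduced}: conditions (\ref{c1}) and (\ref{c2}) are trivial because the divisor is supported on vertices, and condition (\ref{c3}) amounts to the inequalities $\mathfrak{D}(v_i)\leq i-1$, which read off exactly as the coefficient inequalities $D(v_i)\leq i-1$ in the ordinary setting. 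Concretely, after computing $D-E$ one finds the coefficient at $v_1$ equals $-1$ and the remaining coefficients $0,1,\dots,k-1,k,\dots,k$ at $v_2,\dots,v_d$, whence the partial-degree sequence $\mathfrak{D}(v_i)$ satisfies (\ref{c3}) with respect to $v_1$.

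\emph{Why the edge lengths do not matter here.}
The key point making this direction length-independent is that both $E$ and the representative of $D-E$ are supported entirely on the vertices of $K_d(\ell)$, so neither the principal-divisor equivalence used to move $D$ to $D-E$ nor the reducedness test ever sees an interior point of an edge. The outgoing-degree computation underlying condition (\ref{c3}) depends only on the combinatorial structure of $K_d$, not on the lengths $\ell$, exactly as recorded in the proof of Lemma \ref{reduced1}; this is in sharp contrast to the \emph{lower} bound, where moving mass along edges of different lengths is unavoidable. The main obstacle, then, is purely bookkeeping: one must confirm that subtracting $E$ from the chosen representative leaves precisely one negative coefficient at $v_1$ and that the induced ordering witnesses condition (\ref{c3}). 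Since this is the same arithmetic already executed in Remark \ref{specialization}, I expect no genuine difficulty, and the upper bound follows.
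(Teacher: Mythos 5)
Your proposal is correct and coincides with the paper's own proof: the paper takes $E=\sum_{j=1}^{k+2}(k+2-j)(v_j)$, which is exactly your divisor $\sum_{i=1}^{k+1}\bigl(k-(i-2)\bigr)(v_i)$ up to a zero coefficient at $v_{k+2}$, and likewise concludes by observing that $D-E$ is $v_1$-reduced with a negative coefficient at $v_1$ (noting, as you do, that Lemma \ref{reduced1} is insensitive to the edge lengths), hence $|D-E|=\emptyset$ and $\textrm{rk}_\Gamma(D)\leq\frac{k(k+3)}{2}$.
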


\begin{proof} Consider the effective divisor 
\[
E = \sum_{j=1}^{k+2}(k+2-j)(v_j), 
\]
which has degree $\frac{k(k+3)}{2}+1$. The divisor $D-E$ is $v_1$-reduced (Lemma \ref{reduced1} also holds for arbitrary edge lengths) and has a negative coefficient at $v_1$, hence $|D-E|=\emptyset$
and $\text{rk}_\Gamma(D)\leq \frac{k(k+3)}{2}$. 
\end{proof}

\begin{prop}
If $d\geq 5$, the divisor $D = \sum_{i=1}^{d} 2(v_i)$ on $\Gamma=K_d(\ell)$ has rank $5$.
\end{prop}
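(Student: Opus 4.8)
The plan is to prove $\text{rk}_\Gamma(D)=5$ by matching the upper bound $\text{rk}_\Gamma(D)\le\frac{k(k+3)}{2}=5$ from the preceding lemma (with $k=2$) with the lower bound $\text{rk}_\Gamma(D)\ge 5$. To obtain the lower bound I would first cut down the divisors that have to be tested: by Luo's theorem on rank-determining sets \cite{Luo}, the vertex set $V(K_d)$ is rank-determining, so it suffices to show that $|D-E|\neq\emptyset$ for every \emph{vertex-supported} effective divisor $E$ of degree $5$. Writing $E=\sum_{i=1}^d e_i(v_i)$ with $e_i\ge 0$ and $\sum_i e_i=5$, the task becomes: prove that $D-E=\sum_{i=1}^d(2-e_i)(v_i)$ is linearly equivalent to an effective divisor.

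The first step is a bookkeeping observation: since $\sum_i e_i=5$, at most one index can have $e_i\ge 3$ (two of them would already use up at least $6$). Hence $D-E$ has at most one negative coefficient, necessarily of the form $2-e_1\in\{-1,-2,-3\}$ after relabelling, while $v_2,\dots,v_d$ carry nonnegative coefficients. If there is no negative coefficient then $D-E$ is already effective and there is nothing to prove, so I am reduced to the three cases in which $v_1$ carries a deficit $m:=e_1-2\in\{1,2,3\}$.

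In each of these cases I would route $m$ chips to $v_1$ by chip-firing on the metric graph, keeping track of the process through Dhar's burning algorithm and the description of $v_1$-reduced divisors in Lemma \ref{reduced1}. The representative (and hardest) case is $m=3$, i.e.\ $E=5(v_1)$: here every one of the $d-1$ neighbours carries exactly $2$ chips; firing the closed region spanned by all of them sends one chip of each neighbour towards $v_1$ and leaves every neighbour with one chip, and these $d-1$ chips then travel unobstructed into $v_1$. The resulting divisor is effective, its coefficient at $v_1$ being $-3+(d-1)=d-4\ge 1$ because $d\ge 5$, with every neighbour still carrying its remaining chip. The cases $m=1,2$ are analogous and in fact easier, a smaller deficit having to be filled; one simply fires the neighbours that actually carry chips and stops the instant the coefficient at $v_1$ reaches $0$, at which point no other coefficient has become negative.

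The step I expect to be the real obstacle, and the reason the elementary argument of Section \ref{1} does not carry over, is proving that enough dispatched chips actually \emph{reach} $v_1$ instead of coming to rest in the interior of an edge. For unit edge lengths one could fire a vertex set combinatorially and invoke Theorem \ref{ranghi}, but for arbitrary lengths the move ``send one chip from each neighbour to $v_1$'' is realized by no rational function, and a travelling chip can be halted partway along its edge as soon as the fire started at $v_1$ manages to loop around an emptied neighbour and attack the chip from behind (this is exactly the failure already visible in genus one). The crux is therefore to show that throughout the relevant phase the neighbours still carrying chips shield $v_1$: each such neighbour, retaining at least one chip, blocks the single fire front coming from $v_1$ and thereby prevents the fire from circling through the rest of the graph, so every dispatched chip travels to $v_1$ unobstructed. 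It is precisely here that $d\ge 5$ is used, guaranteeing at least $d-3\ge 2$ full neighbours---enough both to supply the deficit (at most $3$) and to contain any cascade triggered by an emptied neighbour.
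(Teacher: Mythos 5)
Your frame matches the paper's: the upper bound from the preceding lemma, the reduction to vertex-supported $E$ via \cite{Luo}, and the observation that $D-E$ has at most one negative coefficient are all exactly as in the paper's proof. But the central step fails. For $E=5(v_1)$ you claim the process ends at the effective divisor $(d-4)(v_1)+\sum_{j\geq 2}(v_j)$; this divisor is in general \emph{not} linearly equivalent to $D-E=-3(v_1)+\sum_{j\geq 2}2(v_j)$. Their difference is $\sum_{j\geq 2}(v_j)-(d-1)(v_1)$, a nonzero divisor supported on $V(K_d)$, and for generic edge lengths the only principal divisor supported on vertices is $0$: a rational function $f$ with $\mathrm{div}(f)$ supported on vertices must be affine on every edge with integer slope $n_e=(f(w)-f(v))/\ell_e$, and around any triangle the relation $n_1\ell_1+n_2\ell_2+n_3\ell_3=0$ forces all $n_e=0$ when the $\ell_e$ satisfy no rational relations, so $f$ is constant. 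Hence your target configuration is unreachable, no matter how the chips are choreographed. You in fact concede this (``the move is realized by no rational function'') and then try to recover it with the ``shielding'' heuristic; but chips sitting at other vertices cannot alter the linear-equivalence class of the divisor, so no shielding argument can produce an equivalence that does not exist. Concretely, once the chip on the shortest edge has reached $v_1$, any function moving the remaining interior chips toward $v_1$ while fixing everything else would have to be constant on the complement of the moved segments, and that complement is connected (through the edge $(v_1,v_2)$ and the complete subgraph), forcing the function to be constant; equivalently, any legal continuation of the firing also drags the retained chips off the neighbours --- precisely the cascade you assert is contained, and the reason this proposition requires real work for arbitrary lengths.

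The paper's proof avoids parking one chip from each neighbour at $v_1$ altogether. It moves \emph{both} chips of every neighbour simultaneously toward $v_i$ by the minimal incident edge length $\delta$, so at least one pair lands on $v_i$, contributing $+2$ against the $-3$; if only one pair lands, each remaining pair $2(p_j)$ is split into two chips moving apart maximally inside the edge $(v_j,v_i)$, and if every such split stalls with one chip back at $v_j$, that chip is rerouted along the path $(v_j,v_k,v_i)$ through the now-empty vertex $v_k$, until some chip reaches $v_i$ and supplies the last $+1$. Note that the effective representative produced this way has chips at interior points of edges; an effective representative supported on vertices, which is what your argument insists on, provably does not exist for generic $\ell$. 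To repair your proof you would have to replace the ``unobstructed travel'' step by pair-moves and pair-splittings of this kind (or another construction whose output is allowed to have interior support).
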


\begin{proof} We need to show that $|D-E| \not = \emptyset$ for every effective divisor $E$ of degree $5$. We may assume that $E$ is supported on the vertices, since the vertex set is rank-determining (see \cite{Luo}). 
If $E(v_i)\leq 2$ for all $i$, then $D-E\geq 0$ and the statement follows directly. Hence, we may assume that $E(v_i)\geq 3$ for some index $i$, so $E$ is of the form $3(v_i) + (v_j) +(v_k)$, $3(v_i) + 2(v_j)$, $4(v_i)+(v_j)$ or $5(v_i)$ for some $i\neq j\neq k \neq i$. We will only handle the latter case; the others are proven in an analogous manner.

Suppose that $E = 5(v_i)$. Denote by $\delta$ the minimal length of the edges $(v_i,v_j)$ with $j\neq i$ and by $p_j \in (v_i,v_j)$ the point on distance $\delta$ from $v_j$. Then  
\[
D-E=-3(v_i) + \sum_{j \not = i} 2(v_j) \quad \sim \quad F:=-3(v_i) + \sum_{j \not = i} 2(p_j).
\]
If $p_j = v_i$ for more than one index $j$, then we are done since $F\geq 0$. Otherwise, this means that the divisor $F$ has coefficient $-1$ at $v_i$. Say that $k$ is the unique index for which $p_k = v_i$. Now we consider the linearly equivalent divisor 
\[
F':=-(v_i) + \sum_{j \not =i, \ j \not = k} \Big((s_j)+(t_j)\Big),\]
with $s_j, t_j \in (v_j,v_i)$ the points on equal and maximal distance from $p_j$, where $s_j$ is in between $p_j$ and $v_j$, and $t_j$ is in between $p_j$ and $v_i$. Because of the maximality assumption, either $t_j=v_i$ or $s_j=v_j$ for all $j\neq i$. If $t_j = v_i$ for at least one $j$, we can conclude since $F'\geq 0$. Otherwise, we have that $s_j = v_j$ for all $j$. Then we can consider a linearly equivalent divisor  
\[
F''=-(v_i) + \sum_{j\not =i, \ j\not=k} \Big((s'_j)+(t'_j)\Big) 
\]
with $t'_j \in (v_j,v_i)$, $s'_j$ on the path $(v_j,v_k,v_i)$ and $t'_j = v_i$ or $s'_j = v_i$ for a certain $j$, hence $F''\geq 0$. 
\end{proof}

\vspace{\baselineskip}

Also the proof of the sharpness of the upper bound cannot directly be extended to the graphs $\Gamma=K_d(\ell)$. Indeed, the divisor $D'$ and $D''$ introduced in (\ref{prob1}) and (\ref{prob2}) are no longer linearly equivalent to $D$. It is natural to consider instead the following divisors 
\begin{align}
&D' = \Big( b- (d-1) \Big) (v_d) + \sum_{i=1}^{d-1} \Big[D|_{A_i} + (a+1) (p_i)\Big] \sim D, \\
&D'' =  b \, (v_d) + \sum_{i=1}^{d-1} \Big[D|_{A_i} + a \, (p_i)\Big] \sim D,
\end{align}
with $p_i \in (v_d,v_i)$ and $p_j = v_j$ for at least one index $j$. But then we run into problems while constructing the effective divisor $E$ of degree $\frac{k(k+1)}{2}$, since in general $D'-E$ and $D''-E$ are not $v_d$-reduced. 
\end{section}

\bibliographystyle{amsalpha}
\bibliography{bibliografia}

\end{document}